\newtheorem{defi}{Definition}
\newtheorem{theorem}{Theorem}[section]
\newtheorem{lemma}[theorem]{Lemma}
\newtheorem{corollary}[theorem]{Corollary}
\newtheorem{example}{Example}
\newcommand{\fe}{  {K, \alpha, \boldsymbol{\beta}}  }
\newcommand{\nb}{   {\tiny \boxed{ \Psi}} }
\begin{document}

\title{\LARGE \bf FKG (and other  inequalities) via (generalized) FK representation (and
iterated folding) }

\author{ Alberto Gandolfi }
\vspace{5mm}


\maketitle

\thispagestyle{plain}
\pagestyle{plain}

\begin{abstract}

In this paper we prove several 
inequalities by
means of diagrammatic expansions,
a technique already used in \cite{BG13}. This time
we show that iterations of the folding of a probability  leads
to the proof of some inequalities by means of  a generalized
random cluster representation of the iterated foldings.
One of the inequalities is the well known FKG inequality,
which ends up being proven, quite unexpectedly,   by means of the (generalized) FK
representation. 

Although most of the results are not new, we hope that the techniques will 
find applications in  other contexts.

\end{abstract}


\section{Dedication}

This paper is dedicated to Chuck Newman on the occasion of his 
birthday.
The FKG inequality and the FK random cluster representations 
have been present in Chuck Newman research
in many works, starting from \cite{N80, N83, N94}, and, 
recently, in \cite{CJN17}. They appeared as two separate topics, as
they have always been so far.
They also played a role in many discussions I had with Chuck, during
which I gained  a deep insights on several problems in 
statistical mechanics, in particular spin glasses, and on many other subjects.

I present here a  proof of the FKG inequality which
 relies
on (an extension of) the FK representation (with some additional ingredients, such as foldings). Besides shedding some
light on the relation between the two concepts, I believe that the theory
developed here can be useful in other directions, in particular towards
identifying a role for percolation in the description of the phase transition in 
Spin Glasses (see \cite{NMS08}).
 One of the motivations behind this note is the hope that a close relation might, in due time, be found 
in the foldings.

\section*{Aknowledgments}
The author would like to thank J. van den Berg for very valuable discussions and suggestions.

\section{Introduction} 

\cite{BG13} introduces a technique which allows to prove some inequalities
of BK type concerning disjoint occurrences of events by means of 
diagrammatic expansions. The main tool there is to
consider foldings of a probability and then a generalized FK random
cluster representation of the folded measures. Depending on 
the features of  the random
cluster representation one could prove several different inequalities.

We consider here a further step, in which the folding operation is
iterated and  a tree of possible foldings is generated.
We develop  the terminology
and study the main properties.

We next show that it is possible to consider the random cluster representation of the 
distribution obtained after infinitely many foldings, and that specific representations 
can be obtained depending on the properties of the starting probability.
Such specific representations can then be used to prove inequalities.
The first examples involves the FKG inequality.

The FKG inequality was originally proven as a tool to study  models in 
rigorous statistical mechanics, such as the Ising model \cite{FKG71}.
Almost the same group of authors, and at about
the same time, developed, in an independent work, the FK random cluster representation,
as an alternative parametrization
of the Ising model \cite{FK72}. 
Both the FKG inequality and the FK representations have been 
generalized, and then used in an enormous number of situations (see \cite{G06},
for example).
Note, in particular, that the FK random cluster model representation of
the ferromagnetic Ising model satisfies the FKG inequality. In spite of these
connections, no close relation has ever been found between the
FK representation and the FKG inequality.
By means of the methods introduced here, however, we 
give here an alternative proof of the FKG inequality based on the (generalized
version of the) FK random cluster representation, a
quite   unexpected connection between the two concepts. Other inequalities
are also derived.

The strategy of the paper is as follows. 

First, we recap from \cite{BG13} how folding  and
generalized random cluster representation of the folded probabilities
allows to prove BK type inequalities. We then develop  an 
approximate version of the main results of \cite{BG13}.

Next, we  consider iteration of the folding operation giving rise to a tree of foldings, and note
that one can take the limit of infinitely many foldings. In addition,
we observe that one can restrict to the case in which essential
foldings, as defined below, are all done at the beginning, thus effectively reducing
the study of the limit distribution to a finite number of possible limits.

We then observe that the FKG theory, which
states that the FKG condition implies positive association,
 can be translated into the foldings where it takes an even more natural form.
A probability satisfies the FKG condition if and only if it does the same in 
each folded version, and hence in the limit of infinitely many foldings.
In the opposite direction, if the folded versions satisfy a certain inequality then
the initial probability has positive association. The above mentioned
inequality does indeed climb back the tree of foldings; the price to pay
in this is an exponential factor, which is however controlled by a super exponential
convergence of the distributions to their limits down the tree of foldings.

Starting from a probability $P$ which satisfies the FKG condition, the FKG condition
descends then down through the tree of foldings; it  is there, in the 
infinite limit on the tree of foldings,
 that the FKG condition meets
a simple FK random cluster representation able to easily justify
the inequality which, in turn, climbs back to prove positive association of $P$.

When all of this is applied to negative association it gives some sufficient conditions
for negative association, including Pemantle's result \cite{P00}. 

We hope that all this machinery, which is used here to give an alternative
proof to largely known results
may turn useful for novel results as well.
For simplicity, we consider only finite sets here,
leaving questions about infinite volume limits to further researches.

\section{Preliminaries}

\subsection{Definitions} 
Let $\Lambda$ be a finite set, and $\mathcal F = \{F_i\}_{i \in \Lambda}$
be a collection of finite sets $F_i$.
For each subset $M \subseteq \Lambda$
we denote $\Omega_{ M} = \prod_{i \in M} F_i$. 
We define the usual concatenation of configurations:
if $\omega(1)$ is defined on $B(1)$ and 
$\omega(2)$ in $B(2)$, with $B(1) \cap B(2)=\emptyset$,
then $\omega(1) \omega(2)$ is defined on $B(1) \cup B(2)$
such that $(\omega(1) \omega(2))_i = 
\omega(1)_i $ if $i \in B(1)$ and 
$(\omega(1) \omega(2))_i = 
\omega(2)_i $ if $i \in B(2)$.

For each set $S$, let 
$\mathcal P(S)$ be the family of all subsets of $S$;
we are interested in studying 
probabilities $P$ on $\mathcal P(\Omega_{\Lambda})$.

Throughout the paper $Z_i$ are normalizing constants implicitly defined
by the context.

When $J, K$ etc. represents entities like indices or sets, 
boldfaced characters ${\bf J}, {\bf K}$ etc. represent finite ordered collections of such entities
indexed in some form, i.e. vectors. In the sequel $\mathbb I_A$
represents the indicator function of $A$.

\subsection{Generalized FK-RCR}
A generalized FK random cluster representation, hereafter called RCR,
has been developed in \cite{BG13}, Section $2.1$, as follows.
\begin{defi}
Given finite sets $M$ and $ F_i, i \in M, \Omega_M=\prod_{i \in M} F_i$,
 a family ${\mathcal B} \subseteq {\mathcal P} (M)$; $H =
 \prod_{b \in \mathcal B} \mathcal P(\Omega_b)$;
  and a probability $P$ on 
$\mathcal P(\Omega_M)$,
a $\cal B$-RCR of $P$ is a probability $\nu$ on $\mathcal P(H)$
such that for every $\omega \in  \Omega_M$
\begin{eqnarray} \label{1}
P(\omega) = \frac{1}{Z_1} \sum_{\eta \in H} \nu(\eta) \prod_{ b \in \mathcal B}
\mathbb I_{\omega_b \in \eta_b} =  \frac{1}{Z_1} \sum_{\eta \in H, \eta \sim \omega} \nu(\eta) ,
\end{eqnarray}
where $Z_1$ is a normalizing factor, and $\eta \sim \omega$ means that
$\omega_b \in \eta_b$ for all $b \in \mathcal B$.
\end{defi}
We call $\mathcal B$ the collection of hyperbonds of the RCR, 
and $\nu$ its base probability or simply base.

\bigskip

As discussed in \cite{BG13}, our terminology is slightly different from that used
in \cite{FK72} and subsequent literature \cite{G06}. 
In the first place, we consider set variables $\eta$ instead of
the usual real valued ones. Then
 we focus on the base probability $\nu$ of the RCR; 
the works preceding  \cite{BG13}, instead, were focusing on the joint
distibution $Q(\omega, \eta)= 
 \frac{1}{Z_2}  \nu(\eta) \prod_{ b \in \mathcal B}
\mathbb I_{\omega \sim \eta}$, and, in particular, on the projection
on the $\eta$ variables $\phi(\eta)=\sum_{\omega} Q(\omega, \eta)$,
in which the well-known expression $2^{Cl(\eta)}$ appears.
The use of set variables and the focus on the base probability $\nu$ help streamlining the theory.

To illustrate the connection between the two notations, we now review the FK
representation for the 
Ising model without magnetic field. 
The collection of hyperbonds is made of the edges $E$ of some graph, i.e. $b$ such that $
b \in E$, and hence $ |b|=2$. For each $b=\{i,j\}$, in the standard FK
 we have the real valued
random variable $\mathbb I_{\omega_i = \omega_j}$, which corresponds to the 
set variable $\eta_b$ taking values either $\{(1,1),(-1,-1)\}=
\{\omega_{i,j}: \mathbb I_{\omega_i = \omega_j} \geq 1\}$,
or $\Omega_b=
\{\omega_{i,j}: \mathbb I_{\omega_i = \omega_j} \geq 0\}$;
hence $\nu$ is concentrated on
$ \prod_{b \in \mathcal B} H_b \subseteq H$ with
$H_b= \{\{(1,1),(-1,-1)\} , \Omega_b\}$.
Our $\nu$ is Bernoulli: $\nu=\prod_{b \in \mathcal B} \nu_b$,
where $\nu_b$ is defined on $H_b$ by
$\nu_b(\{(1,1),(-1,-1)\}) = p= 1-\nu_b( \Omega_b)$. We then get
$$
\nu(\eta) =  \frac{1}{Z_1}  p^{|\{b: \eta_b=\{(1,1),(-1,-1)\}\}|} 
(1-p)^{|\{b: \eta_b=\Omega \}|} =  \frac{1}{Z_1}  p^{n_1(\eta)} 
(1-p)^{n_0(\eta)} 
$$
where $n_i$ indicates the number of bonds $b$ with $|\eta_b|=2$. The joint representation is then
$Q(\omega, \eta)=  \frac{1}{Z_2}  p^{n_1(\eta)}
(1-p)^{n_0(\eta)} \mathbb I_{\omega \sim \eta}$; the marginal on $\omega$
is the Ising model with interaction $J$ such that $p=1-e^{-2 J}$,
and the marginal on $\eta$ is the original FK model
$$
\phi(\eta)= \frac{1}{Z_2}  p^{n_1(\eta)} 
(1-p)^{n_0(\eta)} 2^{Cl(\eta)}
$$
where $Cl(\eta)$ is the number of vertex clusters determined by the 
configuration $\eta$ by stating that two vertices $i,j$ are connected if
the $\mathbb I_{\omega_i = \omega_j} = 1$, i.e. 
$\eta_{i,j} \neq \Omega_b$ One pleasant feature of the general RCR is that a
probability $P$ is Gibbs iff it has a Bernoulli RCR (see \cite{BG13}).

All other extensions of the FK representation, such as the one in 
\cite{CL06}, can be easily translated in our framework.

\bigskip

A  notion of connectedness, similar to the one in the original work on random cluster 
representations, is developed in \cite{BG13} for
a general RCR: a hyperbond $b$ is called active in $\eta$ if 
 $\eta_b \neq \Omega_b$; and two sets of vertices
in $\Lambda$ are connected if there is a sequence of active hyperbonds
connecting them. This notion is central in proving the main inequalities
in \cite{BG13}.

\subsection{Foldings of a probability}
In \cite{R00, BG13, CS16} the operation of folding of a probability measure $P$
 is introduced, which amounts to the following: fix one region $K$, one configuration $\alpha$
 on $K$, two pointwise different configurations  $\boldsymbol{\beta}
 =(\beta(1) , \beta(2))$
 of $K^c$, and consider
 a random configuration of $K^c$ which pointwise takes  the value of
 either $\beta(1)$ or $ \beta(2)$, with the probability induced by $P\times P$
 given that both configurations on $K$ coincide with $\alpha$.
 This determines a collection of probabilities as $K, \alpha, \boldsymbol{\beta}$
 vary.
 As each of the resulting probabilities is symmetric (see below),
the entire process can be seen as a dependent coupling of $P$
with itself, which separates an asymmetric nonrandom  "drift" and a 
symmetric randomness.
 
 \begin{defi}
Given a probability $P$ on $\mathcal P(\Omega_{\Lambda})$, a subset 
$K \subseteq \Lambda$, a configuration $\alpha \in \Omega_K$,
and two configurations $\boldsymbol{\beta}  =(\beta(1) , \beta(2)) \in \Omega_{K^c}$, 
with $\beta_i(1) \neq  \beta_i(2)$ for all $i \in K^c$,
we define the $(K, \alpha, \boldsymbol{\beta})$-folded version of $P$
as the probability $P^{K, \alpha, \boldsymbol{\beta}}$ on 
$\Omega_{K^c}$ given by
\begin{eqnarray} \label{2}
P^{K, \alpha, \boldsymbol{\beta}}(\omega_{K^c})
=  \frac{1}{Z_{\fe} } P(\alpha \omega_{K^c})P(\alpha \overline \omega_{ K^c}^{\boldsymbol{\beta}}),
\end{eqnarray}
if there is at least one $\omega_{K^c}$ for which the r.h.s. is not zero; in the definition
 we used concatenation of configurations, and the 
reverse operation $ \overline \omega^{\boldsymbol{\beta}}$ which consists
of exchanging $\omega_i$ from $\beta_i(1)$ to $\beta_i(2)$ or viceversa,
if $\omega_i \in \{\beta_i(1), \beta_i(2)\}$ and inserting a symbol with zero $P$ 
probability otherwise.
Clearly, $P^{K, \alpha, \boldsymbol{\beta}}(\omega_{K^c})$ is concentrated on the
$\omega_{K^c} \in \prod_{i \in K^c} \{\beta_i(1), \beta_i(2)\}$. 
\end{defi}
An equivalent definition is obtained by considering
$$ W_{\fe} = \{(\omega, \omega') \in \Omega\times \Omega :
\omega_K =\omega'_k=\alpha, \text{and } \omega_i,
\omega'_i \in \{\beta_i(1), \beta_i(2)\} \text{ for all } i \in K^c \},
$$
then  taking $P^{K, \alpha, \boldsymbol{\beta}}(\omega_{K^c})
= (P\times P) (\cdot | W_{\fe})$, and finally projecting on the first configuration.
This entails that $Z_{\fe} = (P\times P) (W_{\fe})$.

When there are only two symbols, i.e. $|F_i|=2$, then 
$\boldsymbol{\beta}$ are irrelevant and can be omitted from the
notation.

\begin{example}
The folding of an Ising model $\mu_{{\bf J}, {\bf h}}(\omega)
=\frac{1}{Z} \exp ( \sum_{i,j} J_{i,j} \omega_i \omega_j + \sum_i h_i \omega_i)$,
$\omega \in \{-1,1\}^{\Lambda}$,
is an Ising model $P^{K, \alpha, \boldsymbol{\beta}}=
P^{K, \alpha}= \mu_{2{\bf J}, {\bf 0}}$ with twice the interaction and
zero external field.
\end{example}

\subsection{Disjoint occurrences of events}
\cite{BK85} introduces the concept of disjoint occurrence of events,
proving the first version of the BK inequality, shown then in full 
generality in \cite{R00}.  \cite{BG13}  introduces several different
versions of disjoint occurrence, using the RCR of the folding 
of a probability $P$ to show that the required inequalities
hold for certain $P$'s. 

In all these works, an important role is played by a 
function which indicates which pairs of sets of indices can be used
to identify two events in a given configuration. More precisely,
for $A, B \subseteq \Omega_{\Lambda}$, the set of disjoint 
occurrence pairs is defined by
$$
\mathcal D(A, B, \omega)= \{(K,L):K, L \subseteq \Lambda,
K \cap L= \emptyset, [\omega]_K \subseteq A,  [\omega]_L \subseteq
B\}.
$$
A selection rule is a function $\Psi$ which assigns to each 
$(A, B, \omega)$ a (possibly empty) subset 
$\Psi(A, B, \omega)$ of $\mathcal D(A, B, \omega)$,
i.e. a set of disjoint pairs of subsets of $\Lambda$ from which $A$ and $B$ can be 
disjointly recognized. By means of a
selection rule we can define the generalized box 
operation   \cite{BG13} 
\begin{eqnarray} \label{GenBox}
A \nb  B= \{ \omega \in \Omega_{\Lambda}:
\Psi(A, B, \omega) \neq \emptyset \}.
\end{eqnarray}
If $\Psi(A, B, \omega)=\mathcal D(A, B, \omega)$ we get the usual
box operation as $A   {\tiny \boxed{\mathcal D}} B = A \Box B$.  
\begin{example} \label{SelectionForIncreasing}
If  $F_i=\{0,1\}$ for all $i \in \Lambda$
and $A$ and $B$ are increasing, then 
we can require that $(K,L) \in \Psi(A, B, \omega) $
implies $K, L \subseteq \omega^{-1}(1)$
to disallow the possibility of recognizing events in sets which
uselessly contain vertices $i$ in which $\omega_i=0$.
\end{example}
\begin{example} \label{SelectionForIncreasingDecreasing}
Similarly, if  $F_i=\{0,1\}$ for all $i \in \Lambda$, $A$ is  increasing
and $B$ is decreasing, then we can take 
$K \subseteq \omega^{-1}(1)$
and 
$ L \subseteq \omega^{-1}(0)$.
\end{example}
In this paper we will eventually be dealing only with these last two cases, but 
we state the results for a general $\Psi$ as it does not require much more efforts.

\subsection{Inequalities from approximate RCR and folding}
We 
start by extending the main result of \cite{BG13}  to the case in which the RCR 
of the folding 
is only approximate. 
\begin{defi}
Given finite sets $M$ and $ \Lambda, M \subseteq \Lambda, F_i, i \in M, \Omega_M=\prod_{i \in M} F_i$,
 a family $\cal {B} \subseteq \cal {P}$$( \Lambda) $, constraints $H \subseteq 
 \prod_{b \in \mathcal B'} \mathcal P(\Omega_b)$
 on hyperbonds configurations, with $\mathcal B'=\{b'=b \cap M, b \in \mathcal B\}$,  a probability $P$ on 
$\mathcal P(\Omega_M)$, and $\epsilon >0$,
a {\bf $\epsilon$-$\cal B$-RCR} of $P$ is a probability $\nu$ on $\mathcal P(H)$
such that for every $\omega \in  \Omega_M$
\begin{eqnarray} \label{1.1}
P(\omega) =  \frac{1}{Z_1} \sum_{\eta \in H, \eta \sim \omega} \nu(\eta)
\pm \epsilon,
\end{eqnarray}
where $x=y \pm \epsilon$ stands for $|x-y| \leq \epsilon$.

\end{defi}

\begin{lemma} \label{2.3.2}
Let $\Lambda$ be a finite set, $\Omega_{\Lambda} = \prod_{i \in \Lambda} F_i$,
and suppose $|F_i|=2$ for all $i \in \Lambda$; let $P$ be a probability on 
$\Omega_{\Lambda}$, $A,B \subseteq \Omega_{\Lambda}$,
$\Psi$ a selection rule. Suppose that, for $\epsilon >0$, $P$ 
has a symmetric $\frac{\epsilon}{2^{|\Lambda |+1}}$-${\mathcal B}$-RCR
$\nu$ such that $\Psi$ uses disjoint random clusters of $\nu$ for 
$A,B$, then 
$$
P(A \nb B) \leq P(A \cap \overline B) \pm   \epsilon,
$$
where $\overline B$ is the event obtained from $B$ by 
changing all the $\omega_i$'s into $\overline \omega_i=
1-\omega_i$. Here symmetric means that $\nu(\prod_{b \in \mathcal B}
\eta_b) = \nu(\prod_{b \in \mathcal B}\overline \eta_b)$, and connectedness and related
clusters
are defined at the end of  Section 3.2.

\end{lemma}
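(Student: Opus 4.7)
The plan is to adapt the exact-case argument of \cite{BG13}—which proves $P(A\,\nb\,B)\leq P(A\cap\overline B)$ from a symmetric $\mathcal B$-RCR whose clusters are respected by $\Psi$—and propagate the per-configuration slack $\epsilon/2^{|\Lambda|+1}$ through each substitution of (1.1). Since $|\Omega_{\Lambda}|=2^{|\Lambda|}$, each such substitution costs at most $\epsilon/2$ in aggregate, and the proof calls for exactly two of them, matching the stated $\pm\epsilon$.

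First I would unfold
\[
P(A\,\nb\,B)\;=\;\sum_{\omega\in A\,\nb\,B}P(\omega)\;=\;\frac{1}{Z_1}\sum_{\omega\in A\,\nb\,B}\sum_{\eta\sim\omega}\nu(\eta)\;\pm\;\frac{\epsilon}{2},
\]
using (1.1) together with $|A\,\nb\,B|\leq 2^{|\Lambda|}$. For the main double sum, fix $(\omega,\eta)$ with $\omega\in A\,\nb\,B$ and $\eta\sim\omega$. By the hypothesis on $\Psi$, the witness $\Psi(A,B,\omega)$ contains a pair $(K,L)$ sitting in two distinct clusters $C_A,C_B$ of $\eta$, with $[\omega]_K\subseteq A$ and $[\omega]_L\subseteq B$. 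Reflecting $\omega$ on $C_B$ (bit-flip) and $\eta$ on $C_B$ (hyperbond-wise, $\eta_b\mapsto\overline{\eta_b}$) yields a new pair $(\omega',\eta')$ with the following properties: $\nu(\eta')=\nu(\eta)$ by the symmetry hypothesis; $\omega'\sim\eta'$ because compatibility is a per-hyperbond condition and each hyperbond lies entirely inside one cluster, so reflection preserves $\omega_b\in\eta_b$; $\omega'\in A$ because $K\cap C_B=\emptyset$ forces $\omega'_K=\omega_K$; and $\omega'\in\overline B$ because $L\subseteq C_B$ gives $\overline{\omega'}_L=\omega_L$, whence $\overline{\omega'}\in[\omega]_L\subseteq B$.

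The map $(\omega,\eta)\mapsto(\omega',\eta')$ is an involution (reflecting again on the same cluster, which survives as a cluster of $\eta'$ since active hyperbonds are preserved by $\eta_b\mapsto\overline{\eta_b}$). Hence the double sum equals $\sum_{\omega'\in A\cap\overline B}\sum_{\eta'\sim\omega'}\nu(\eta')$, and a second application of (1.1)—again costing at most $\epsilon/2$—gives $Z_1\cdot P(A\cap\overline B)\pm Z_1\cdot\epsilon/2$. Collecting the two slacks yields the claimed bound.

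The main obstacle is the bookkeeping step in which the involution is shown to map the indexing set $\{(\omega,\eta):\omega\in A\,\nb\,B,\,\eta\sim\omega\}$ exactly onto $\{(\omega',\eta'):\omega'\in A\cap\overline B,\,\eta'\sim\omega'\}$, which requires that the flipped witness $(K,L)$ still lies in $\Psi(A,\overline B,\omega')$—i.e.\ that the selection rule is covariant under cluster reflection. This is the substantive content of the hypothesis ``$\Psi$ uses disjoint random clusters of $\nu$'' from Section~3.2, and once it is unpacked as in \cite{BG13}, the rest reduces to the elementary $\epsilon$-arithmetic above.
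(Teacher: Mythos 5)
There is a genuine gap at the heart of your argument: the claimed reflection involution does not exist, and could not, because the combinatorial content of this lemma is (a conditional form of) the van den Berg--Kesten--Reimer inequality, which has no known proof by a simple injection of this type. Concretely, your map $(\omega,\eta)\mapsto(\omega',\eta')$ is not well defined without a canonical choice of witness $(K,L)$ and of the cluster(s) to flip; once such a choice is made, distinct sources can collide (flipping different clusters of the same $\eta$ starting from different $\omega$'s can produce the same $\omega'$), so injectivity is not established; and the assertion that the map is an involution would give \emph{equality} of the two double sums, i.e. $P(A\,\nb\,B)=P(A\cap\overline B)\pm\epsilon$, which is false in general (the correct statement is only an inequality). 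A further, smaller problem is that you reflect $\eta$ hyperbond-wise on a single cluster and invoke $\nu(\eta')=\nu(\eta)$, but the symmetry hypothesis of the lemma only concerns the \emph{global} reflection $\eta\mapsto\prod_b\overline{\eta_b}$, not per-cluster reflections.

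The paper's proof takes a structurally different route precisely to avoid this. It fixes $\eta$, observes that cluster-wise reflections of $\omega$ remain compatible with the \emph{same} $\eta$ ($\nu$-a.s.), and notes that the conditional law of $\omega$ given $\eta$ and the equivalence class $\Omega_{\eta,\omega}$ is uniform on $2^{t(\eta)}$ points; encoding this by the quotient map $T_\eta$ onto $\{0,1\}^{t(\eta)}$ produces an i.i.d.\ symmetric Bernoulli measure, to which Reimer's theorem \cite{R00} is applied as a black box, together with the inclusions $T_\eta(A\,\nb\,B)\subseteq T_\eta(A)\,\Box\,T_\eta(B)$ and $T_\eta(A\cap\overline B)=T_\eta(A)\cap\overline{T_\eta(B)}$. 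The cluster structure buys you the reduction to a product measure on the quotient; it does not buy you the inequality itself, which still requires Reimer. Your $\epsilon$-bookkeeping (two applications of the approximate representation, each costing $2^{|\Lambda|}\cdot\epsilon/2^{|\Lambda|+1}=\epsilon/2$) is fine and matches the paper's, but the core step must be repaired by conditioning on $\eta$ and invoking Reimer on the cluster quotient rather than by a direct involution.
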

\begin{proof}  The proof of a similar statement is given in 
\cite{BG13}, so we omit some tedious details here.
Given a configuration $\eta$ the set of active hyperbonds $b$ falls apart into
$t(\eta)$ connected components $C_1(\eta), \dots, C_{t(j)}(\eta)$ called clusters. Fix now a configuration $\omega$
such that $\omega_b \in \eta_b$ for each $b \subseteq \cup_{I=1}^{t(\eta}
C_{\ell}(\eta)$. 
By the symmetry of $\nu$ and the definition of active hyperbonds, with $\nu$-probability one
 the configuration $\overline \omega^{C_{\ell}(\eta)}$,
which is the reversed $1-\omega_i$ for all $i \in {C_{\ell}(\eta)}$
and coincides with $\omega_i$ for all other $i$'s, also satisfies 
$\overline \omega^{C_{\ell}(\eta)}_b \in \eta_b$ for all $b$'s. Let then 
$\Omega_{\eta, \omega}$
be the set of configurations $\xi \in \Omega_{\Lambda}$ which  coincide with
either $\omega$ or $\overline \omega^{C_{\ell}(\eta)}$ for each $\ell=1,\dots,t(\eta)$; we have
 $|\Omega_{\eta, \omega}|=2^{t(\eta)}$. Consider the joint probability distribution
 $Q(\rho, \xi)=\frac{1}{Z_2} \nu(\rho) \prod_{b \in \mathcal B} \mathbb I_{\xi
 \sim \rho}$; then for the given $\eta$ and $\omega$,
 $\mu_{\eta,\omega}( \xi) = Q( (\rho, \xi) | \rho=\eta, \xi \in \Omega_{\eta, \omega} )
 =\frac{ \mathbb I_{\xi \in \Omega_{\eta, \omega} }}{2^{t(\eta)}}$
 is concentrated on $\Omega_{\eta, \omega}$. This 
 distribution can then be encoded into i.i.d. two valued symmetric Bernoulli variables
 by a map $T_{\eta}:\Omega_{\eta, \omega} \rightarrow \{0,1\}^{t(\eta)}$ with
 $(T_{\eta} (\xi))_{\ell}=
  \mathbb I_{\xi |_{C_{\ell}(\eta)} = \omega |_{C_{\ell}(\eta)} } $.
  Reimer's \cite{R00} results apply to $T_{\eta}(\mu_{\eta})$ 
   showing that $T_{\eta}(\mu_{\eta})(A' \Box B')
  \leq T_{\eta}(\mu_{\eta})(A' \cap \overline B'$ for all
  events $A', B' \subseteq \{0,1\}^{t(\eta)}$ .
  
  In addition, one can verify, as it is explicitly proven in \cite{BG13},  see the claims on Page 171, 
  that $T_{\eta}( A \nb B) \subseteq T_{\eta}( A ) \Box T_{\eta}( B)$,
  and $T_{\eta}( A \cap B) = T_{\eta}( A ) \cap T_{\eta}( B)$.
  The relation $R(\xi, \omega)$ iff $\xi \in \Omega_{\eta, \omega} $
  partitions$ \{ \omega \in \Omega_{\Lambda}: \omega
\sim \eta\}$ into equivalence classes; indicating
 $\Omega(\eta)=\{ \omega \in \Omega_{\Lambda}: \omega
\sim \eta\}/R$, we have
\begin{eqnarray} 
P( A \nb B) &= & \sum_{\omega \in  A \nb B} P(\omega) \nonumber\\
&=& \sum_{\omega \in  A \nb B}   \frac{1}{Z_1} \sum_{\eta \in H} \nu(\eta)
\mathbb I_{ \eta \sim \omega} \pm 2^{|\Lambda|} \frac{\epsilon}{2^{|\Lambda |+1}}  \nonumber  \\ 
&=&  \sum_{\eta \in H} Q(\eta,  A \nb B)  \pm 2^{|\Lambda|} \frac{\epsilon}{2^{|\Lambda |+1}}  \nonumber \\ 
&=&
 \sum_{\eta \in H} \sum_{\omega \in \Omega(\eta)} \mu_{\eta,\omega}(A \nb B) Q(\Omega_{\eta, \omega}) \pm 2^{|\Lambda|} \frac{\epsilon}{2^{|\Lambda |+1}}  \nonumber \\
&\leq&  \sum_{\eta \in H} \sum_{\omega \in \Omega(\eta)} T_{\eta} (\mu_{\eta,\omega})(T_{\eta}(A) \Box T_{\eta}( B)) Q(\Omega_{\eta, \omega}) \pm 2^{|\Lambda|} \frac{\epsilon}{2^{|\Lambda |+1}} \\
&\leq& \sum_{\eta \in H} \sum_{\omega \in\Omega(\eta)} T_{\eta} (\mu_{\eta,\omega})(T_{\eta}(A) \cap \overline{ T_{\eta}( B)}) Q(\Omega_{\eta, \omega}) \pm 2^{|\Lambda|} \frac{\epsilon}{2^{|\Lambda |+1}}  \nonumber\\
&=&\sum_{\eta \in H} \sum_{\omega \in\Omega(\eta)} T_{\eta} (\mu_{\eta,\omega})(T_{\eta}(A \cap \overline{  B}) Q(\Omega_{\eta, \omega}) \pm 2^{|\Lambda|} \frac{\epsilon}{2^{|\Lambda |+1}}  \nonumber\\
&=&\sum_{\eta \in H} \sum_{\omega \in \Omega(\eta)} \mu_{\eta,\omega}(A \cap 
\overline B) Q(\Omega_{\eta, \omega}) \pm 2^{|\Lambda|} \frac{\epsilon}{2^{|\Lambda |+1}}  \nonumber\\
&=&P(A \cap \overline B)  \pm  \epsilon \nonumber
\end{eqnarray}

\end{proof}
  
Now define the functional
$$
R_{K, \alpha, \boldsymbol{\beta}}(\omega)
=  \{  \omega_{K^c} : \alpha  \omega_{K^c} = \omega
 \text{ and }
( \omega_{K^c})_i \in  \{\beta_i(1), \beta_i(2)\}   \text{  for all }  i \in K^c \}.
 $$
 
If $\Psi$ is a selection rule on $\Lambda$, $K \subseteq \Lambda$
and $\alpha \in \Omega_K$,
  we let $\Psi_{K, \alpha}$
be defined by
$\Psi_{K, \alpha}(A', B',  \omega_{K^c})
= \Psi(\alpha A', \alpha B', \alpha \omega_{K^c})\cap K^c$ for all $A', B' \subseteq 
\Omega_{K^c}$,
where $\alpha A'$ is the concatenation of the configurations of $A'$
with $\alpha$,  and for a collection of pairs  of sets $(M,N)$'s
the intersection with $K^c$ is simply the collection of intersections
$(M \cap K^c,N \cap K^c)$.
 $\Psi_{K, \alpha}$
is a selection rule in $\mathcal P(\Omega_{K^c})\times \mathcal P(\Omega_{K^c})
\times \Omega_{K^c}$, and one can see that 
$$
R_{K, \alpha, \boldsymbol{\beta}}(A \nb B)
\subseteq R_{K, \alpha, \boldsymbol{\beta}}(A)
 {\tiny \boxed{ \Psi_{K, \alpha} }} R_{K, \alpha, \boldsymbol{\beta}}(B).
 $$

\begin{lemma} \label{2.3.3}
For  $\Lambda, F_i, \Omega_{\Lambda}, P, \Psi $ $A, B \in \Omega_{\Lambda}$,
 as before, we have that if for each $K, \alpha, \boldsymbol{\beta}$
\begin{eqnarray*}
 && P^{K, \alpha, \boldsymbol{\beta}} (R_{K, \alpha, \boldsymbol{\beta}}(A)
 {\tiny \boxed{ \Psi_{K, \alpha} }} R_{K, \alpha, \boldsymbol{\beta}}(B)) \\
&& \quad \leq  P^{K, \alpha, \boldsymbol{\beta}}(R_{K, \alpha, \boldsymbol{\beta}}(A)
 \cap \overline{ R_{K, \alpha, \boldsymbol{\beta}}(B) } ) \pm \epsilon
\end{eqnarray*}
then
$$
P(A \nb B) \leq P(A) P(B)  \pm \epsilon.
$$

\end{lemma}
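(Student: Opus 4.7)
The plan is to use the folding operation to decompose both $P(A)P(B)$ and $P(A\nb B)$ as convex combinations of quantities on folded spaces, and then to invoke the hypothesis termwise.

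First I observe that the coupling sets $W_{K,\alpha,\boldsymbol{\beta}}$ introduced just after the definition of the folded probability partition $\Omega_\Lambda \times \Omega_\Lambda$: every pair $(\omega,\omega')$ uniquely determines $K=\{i:\omega_i=\omega'_i\}$, $\alpha=\omega|_K$, and the unordered pairs $\{\beta_i(1),\beta_i(2)\}=\{\omega_i,\omega'_i\}$ for $i\in K^c$, so $(\omega,\omega')$ lies in a unique such $W$ (once $\boldsymbol{\beta}$ is indexed up to swapping $\beta_i(1)\leftrightarrow\beta_i(2)$, which leaves $W_{K,\alpha,\boldsymbol{\beta}}$ and $P^{K,\alpha,\boldsymbol{\beta}}$ invariant). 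Combined with the identity
$$
P(\omega)P(\omega')\;=\;Z_{K,\alpha,\boldsymbol{\beta}}\,P^{K,\alpha,\boldsymbol{\beta}}(\omega_{K^c})
\qquad \text{on }W_{K,\alpha,\boldsymbol{\beta}},
$$
which is just (\ref{2}) together with $Z_{\fe}=(P\times P)(W_{\fe})$ from the excerpt, this yields the decomposition
$$
P\times P \;=\; \sum_{K,\alpha,\boldsymbol{\beta}} Z_{K,\alpha,\boldsymbol{\beta}}\,(P\times P)(\,\cdot\,\mid W_{K,\alpha,\boldsymbol{\beta}}), \qquad \sum_{K,\alpha,\boldsymbol{\beta}} Z_{K,\alpha,\boldsymbol{\beta}}=1.
$$

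Next I evaluate both sides of the conclusion on this partition. On $W_{K,\alpha,\boldsymbol{\beta}}$ one has $\omega=\alpha\omega_{K^c}$ and $\omega'=\alpha\overline{\omega_{K^c}}^{\boldsymbol{\beta}}$, so $\{\omega\in A,\omega'\in B\}$ translates into $\{\omega_{K^c}\in R_{K,\alpha,\boldsymbol{\beta}}(A)\cap \overline{R_{K,\alpha,\boldsymbol{\beta}}(B)}\}$, yielding
$$
P(A)P(B)=\sum_{K,\alpha,\boldsymbol{\beta}} Z_{K,\alpha,\boldsymbol{\beta}}\,P^{K,\alpha,\boldsymbol{\beta}}\!\bigl(R_{K,\alpha,\boldsymbol{\beta}}(A)\cap\overline{R_{K,\alpha,\boldsymbol{\beta}}(B)}\bigr).
$$
Similarly, writing $P(A\nb B)=\sum_{\omega,\omega'}P(\omega)P(\omega')\,\mathbb{I}_{\omega\in A\nb B}$ (using $\sum_{\omega'}P(\omega')=1$) and decomposing the same way,
$$
P(A\nb B)=\sum_{K,\alpha,\boldsymbol{\beta}} Z_{K,\alpha,\boldsymbol{\beta}}\,P^{K,\alpha,\boldsymbol{\beta}}\!\bigl(R_{K,\alpha,\boldsymbol{\beta}}(A\nb B)\bigr).
$$

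To conclude, I invoke the inclusion $R_{K,\alpha,\boldsymbol{\beta}}(A\nb B)\subseteq R_{K,\alpha,\boldsymbol{\beta}}(A)\,{\tiny\boxed{\Psi_{K,\alpha}}}\,R_{K,\alpha,\boldsymbol{\beta}}(B)$ recalled right before the lemma together with the hypothesis to bound each summand by $P^{K,\alpha,\boldsymbol{\beta}}(R_{K,\alpha,\boldsymbol{\beta}}(A)\cap\overline{R_{K,\alpha,\boldsymbol{\beta}}(B)})+\epsilon$; summing and using $\sum Z_{K,\alpha,\boldsymbol{\beta}}=1$ gives $P(A\nb B)\leq P(A)P(B)+\epsilon$. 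The main obstacle, and the only genuinely non-formal point, is the bookkeeping of the $\boldsymbol{\beta}$-indexing in the case $|F_i|>2$: one must make sure that the $W_{K,\alpha,\boldsymbol{\beta}}$'s actually form a partition of $\Omega_\Lambda\times\Omega_\Lambda$ rather than an overlapping cover, which requires either treating $\boldsymbol{\beta}$ as an unordered pair of configurations or fixing a canonical ordering and checking invariance of $P^{K,\alpha,\boldsymbol{\beta}}$ under the swap; once this is done, the argument is purely a rearrangement of sums.
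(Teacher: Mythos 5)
Your proof is correct and follows essentially the same route as the paper's: decompose $P\times P$ over the partition $\{W_{K,\alpha,\boldsymbol{\beta}}\}$, identify $(P\times P)(A\times B\mid W_{\fe})$ with $P^{\fe}(R_{\fe}(A)\cap\overline{R_{\fe}(B)})$ and $(P\times P)(A\nb B\times\Omega_\Lambda\mid W_{\fe})$ with $P^{\fe}(R_{\fe}(A\nb B))$, then apply the inclusion $R_{\fe}(A\nb B)\subseteq R_{\fe}(A)\,{\tiny\boxed{\Psi_{K,\alpha}}}\,R_{\fe}(B)$ and the hypothesis termwise. Your closing remark on treating $\boldsymbol{\beta}$ as an unordered pair so that the $W$'s genuinely partition $\Omega_\Lambda\times\Omega_\Lambda$ is a point the paper passes over silently, and it is resolved exactly as you suggest.
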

\begin{proof} Recall that the 
$ W_{\fe}$'s, with varying $K \subseteq \Lambda,
\alpha \in \Omega_K, \boldsymbol{\beta} \in \Omega_{K^c}$
 form a partition of $\Omega_{\Lambda} \times \Omega_{\Lambda}$.
 Furthermore, $P^{K, \alpha, \boldsymbol{\beta}}(\omega_{K^c})
 =(P \times P)(\alpha \omega_{K^c} \times \Omega_{\Lambda}
 |W_{\fe})$
 so that for $A \subseteq \Omega_{\Lambda}$, 
 $$
 (P \times P)(A \times \Omega_{\Lambda}
 |W_{\fe})= P^{K, \alpha, \boldsymbol{\beta}}(R_{K, \alpha, \boldsymbol{\beta}}(A )).
 $$
 Thus 
\begin{eqnarray} 
P( A \nb B) &= & 
(P \times P)( A \nb B \times \Omega_{\Lambda}) \nonumber \\
 &= &  \sum_{K \subseteq \Lambda} \sum_{\alpha \in \Omega_K} \sum_{\boldsymbol{\beta}}
(P \times P)( A \nb B \times \Omega_{\Lambda}|W_{\fe}) \nonumber \\
 &&\quad  \quad  \quad  \quad \quad  \quad  \quad  \quad  \quad  \quad 
 \times  (P \times P)(W_{\fe})  \\
 &\leq &  \sum_{K \subseteq \Lambda} \sum_{\alpha \in \Omega_K} \sum_{\boldsymbol{\beta}}
((P \times P)(A
\times B|W_{\fe})\pm \epsilon) \nonumber \\
 &&\quad  \quad  \quad  \quad \quad  \quad  \quad  \quad  \quad  \quad 
  \quad  \quad  \quad \quad  \quad  \quad  
 \times(P \times P)(W_{\fe}) \nonumber \\
 &= & P(A) P(B) \pm \epsilon
  \nonumber 
\end{eqnarray}
as for each $K \subseteq \Lambda,
(P \times P)( A \nb B \times \Omega_{\Lambda}|W_{\fe}),
 \alpha \in \Omega_K, \boldsymbol{\beta} \in \Omega_{K^c}$
 we have
\begin{eqnarray} 
&&(P \times P)( A \nb B \times \Omega_{\Lambda}|W_{\fe})  \nonumber \\
 && \quad  \quad  \quad =
 P^{\fe}( R_{\fe}(A \nb B)) (P \times P)(W_{\fe})\nonumber \\
&&\quad  \quad  \quad \leq 
 P^{K, \alpha, \boldsymbol{\beta}} (R_{K, \alpha, \boldsymbol{\beta}}(A)
 {\tiny \boxed{ \Psi_{K, \alpha} }} R_{K, \alpha, \boldsymbol{\beta}}(B))
  \nonumber \\
 &&\quad  \quad  \quad  \quad \quad  \quad  \quad  \quad  \quad  \quad 
  \quad  \quad  \quad \quad  \quad  \quad  
 \times (P \times P)(W_{\fe})
 \nonumber \\
&&\quad  \quad  \quad \leq   
(P^{K, \alpha, \boldsymbol{\beta}} (R_{K, \alpha, \boldsymbol{\beta}}(A)
\cap \overline{ R_{K, \alpha, \boldsymbol{\beta}}(B)}) \pm \epsilon)
 \\
 &&\quad  \quad  \quad  \quad \quad  \quad  \quad  \quad  \quad  \quad 
  \quad  \quad  \quad \quad  \quad  \quad  \times (P \times P)(W_{\fe})
\nonumber \\
&&\quad  \quad  \quad=  (
(P \times P)(R_{K, \alpha, \boldsymbol{\beta}}(A)
\times { R_{K, \alpha, \boldsymbol{\beta}}(B)}) \pm \epsilon)
\nonumber \\
 &&\quad  \quad  \quad  \quad \quad  \quad  \quad  \quad  \quad  \quad 
  \quad  \quad  \quad \quad  \quad  \quad  \times (P \times P)(W_{\fe})
\nonumber \\
&&\quad  \quad  \quad=   (
  (P \times P)(A
\times B|W_{\fe})\pm \epsilon
)(P \times P)(W_{\fe}) \nonumber
\end{eqnarray}

\end{proof}
Combining results equivalent to Lemmas  \ref{2.3.2} and \ref{2.3.3}
for $\epsilon =0$, it is proven in \cite{BG13} that if $\Psi$ used
disjoint clusters of each RCR of the foldings of a probability $P$, 
then $P(A \nb B) \leq P(A) P(B)$.

\section{Iterated foldings}

\subsection{Iterated foldings}
We now introduce the notation for dealing with iterated foldings. After the first 
folding the distributions are binary and symmetric, so the first folding is often
singled out in the notation, and results for subsequent foldings assume symmetry.

We use the usual symbols $\Lambda, \mathcal B(\Lambda),
F_i, P$ on 
$\Omega_{\Lambda}$.
\begin{defi}
For $i=1,\dots,n$, let $K_i \subseteq \left( \cup_{j=1}^{i-1} K_j \right)^c$,
$K_j \subseteq \Lambda$, be a sequence of sets, and indicate 
${\bf K}_i=(K_1, \dots, K_i)$; let $\alpha_1 \in \Omega_{K_1}$, and for 
$i \geq 2$, $\alpha_i \in \Omega_{K_i}(\boldsymbol{\beta})$
and $\boldsymbol{ \alpha}_i=(\alpha_1, \dots, \alpha_i)$; and, finally,
let $\boldsymbol{\beta} =(\beta(1),\beta(2))$ with $\beta(i) \in  \Omega_{K_1^c}$.

Given $\mathcal B(\Lambda)$,
the $({\bf K}_n, \boldsymbol{ \alpha}_n, \boldsymbol{\beta})$-$\mathcal B$-folding 
$ P^{{\bf K}_n, \boldsymbol{ \alpha}_n, \boldsymbol{\beta}}$ of $P$
is defined recursively on $\Omega_{(\cup_{K \in {\bf K}_i} K)^c}$
by $ P^{{\bf K}_i, \boldsymbol{ \alpha}_i, \boldsymbol{\beta}}
:={(   P^{{\bf K}_{i-1}, \boldsymbol{ \alpha}_{i-1}, \boldsymbol{\beta}}  )}^{k_i, \alpha_i}$
for $i \geq 2$, starting from $ P^{{\bf K}_1, \boldsymbol{ \alpha}_1, \boldsymbol{\beta}}
:= P^{{ K}_1, { \alpha}_1, \boldsymbol{\beta}}$. It is natural to denote 
$ {\bf P}^{\bf 0} = P$, with $({\bf K}_0, \boldsymbol{ \alpha}_0, \boldsymbol{\beta})={\bf 0}$.

\end{defi}
The set of iterated foldings has the structure of an infinite tree, indexed
by 
$({\bf K}_n, \boldsymbol{ \alpha}_n, \boldsymbol{\beta})$, $n=0, 1, \dots$. The direct descendants
of $({\bf K}_i, \boldsymbol{ \alpha}_i, \boldsymbol{\beta})$ are of the form 
$(({\bf K}_i, K_{i+1}), (\boldsymbol{ \alpha}_i, \alpha_{i+1}), \boldsymbol{\beta})$
with $K_{i+1} \subseteq (\cup_{K \in {\bf K}_i} K)^c, \alpha_{i+1} \in \Omega_{K_{i+1}}$;
$ \boldsymbol{\beta}$ is not relevant after the first folding. Moreover, if
$K_i = \emptyset$ then $\alpha_i  \in \Omega_{K_i}$ is just a formal symbol (sometimes we use
a generic symbol $\alpha$ for this).

\bigskip

An {\bf infinite branch } in a tree of foldings is a sequence 
$({\bf K}_i, \boldsymbol{ \alpha}_i, \boldsymbol{\beta})$ of vertices of the tree. The 
sequence $(\cup_{K \in {\bf K}_i} K)^c$ is non increasing, and in an infinite
branch of the tree of foldings it must be asymptotically constant.
Thus, for any infinite branch there exists $L$ such that $K_i=\emptyset$ for
all $i \geq L$. 

The {\bf limiting distribution in an infinite branch} of the tree of foldings such that
$K_i=\emptyset$ for
all $i \geq L$ is the probability $P^{\infty}$ defined on $\overline \Omega_{\Lambda}
:= \Omega_{(\cup_{K \in {\bf K}_L} K)^c}$ by $P^{\infty}(\omega)
= \lim_{i \rightarrow \infty} P^{{\bf K}_i, \boldsymbol{ \alpha}_i, \boldsymbol{\beta}}(\omega)$.
\begin{lemma} \label{5.1.3}
For $L \geq 1$ the limiting distribution in an infinite branch of the tree of foldings
such that $K_i=\emptyset$ for
all $i \geq L$ always exists, and equals
$$
P^{\infty}(\omega)=\frac{1}{Z_{\infty}} \mathbb I_{\{\omega: P^{{\bf K}_L, \boldsymbol{ \alpha}_i, \boldsymbol{\beta}}(\omega)
\geq P^{{\bf K}_L, \boldsymbol{ \alpha}_i, \boldsymbol{\beta}}(\omega')
\text{ for all } \omega' \in \Omega_{{(\cup_{K \in {\bf K}_L} K)^c}} \} },
$$
i.e. it is the uniform distribution on the maxima of 
$ P^{{\bf K}_L, \boldsymbol{ \alpha}_L, \boldsymbol{\beta}}$,
where $Z_{\infty}$ is the number of such maxima. Moreover, for 
$i > L$, and any $\omega$ such that $P^{\infty}(\omega)>0$
\begin{eqnarray} \label{5.1.5}
&& \sup_{\omega \in \Omega_{L   }}
|P^{{\bf K}_i, \boldsymbol{ \alpha}_i, \boldsymbol{\beta}}(\omega) - 
P^{\infty}(\omega)| \\
&&\quad \quad \quad \leq   |\Omega_{\Lambda}| \frac{ \max_{\omega'': P^{\infty}(\omega')=0} 
(P^{{\bf K}_L, \boldsymbol{ \alpha}_L, \boldsymbol{\beta}}(\omega'') )^{2^{i-L}} }
{(P^{{\bf K}_L, \boldsymbol{ \alpha}_L, \boldsymbol{\beta}}(\omega') )^{2^{i-L}}} =:a^{2^i} \nonumber
\end{eqnarray}
with $a<1$.

\end{lemma}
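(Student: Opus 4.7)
The plan is to derive an explicit closed form for $P^{{\bf K}_i, \boldsymbol{\alpha}_i, \boldsymbol{\beta}}$ valid for all $i \geq L$; once this is in hand, both the identification of $P^\infty$ and the quantitative rate of convergence reduce to elementary manipulations of powers of a finite probability mass function.

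First I would observe that by the folding definition (2) with $K=\emptyset$ one has $Q^{\emptyset}(\omega) = Q(\omega)\,Q(\overline\omega^{\boldsymbol{\beta}})/Z$, while inspection of the right hand side of (2) shows that any folded measure is symmetric under the involution $\omega\mapsto\overline\omega^{\boldsymbol{\beta}}$ (the two factors simply swap). Combining these two facts, whenever $K_{i+1}=\emptyset$ the recursion collapses to
$$P^{{\bf K}_{i+1},\boldsymbol{\alpha}_{i+1},\boldsymbol{\beta}}(\omega)\;=\;\frac{P^{{\bf K}_i,\boldsymbol{\alpha}_i,\boldsymbol{\beta}}(\omega)^2}{\sum_{\omega'} P^{{\bf K}_i,\boldsymbol{\alpha}_i,\boldsymbol{\beta}}(\omega')^2}.$$
A straightforward induction from level $L$ then yields
$$P^{{\bf K}_i,\boldsymbol{\alpha}_i,\boldsymbol{\beta}}(\omega)\;=\;\frac{P^{{\bf K}_L,\boldsymbol{\alpha}_L,\boldsymbol{\beta}}(\omega)^{2^{i-L}}}{\sum_{\omega'} P^{{\bf K}_L,\boldsymbol{\alpha}_L,\boldsymbol{\beta}}(\omega')^{2^{i-L}}}\qquad(i\geq L).$$

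Let $M=\max_\omega P^{{\bf K}_L,\boldsymbol{\alpha}_L,\boldsymbol{\beta}}(\omega)$. Dividing numerator and denominator by $M^{2^{i-L}}$, every term indexed by an argmax of $P^{{\bf K}_L,\boldsymbol{\alpha}_L,\boldsymbol{\beta}}$ contributes $1$ to the denominator, while every other term tends to $0$ super-exponentially. In the limit $i\to\infty$ one therefore obtains the uniform distribution on the argmaxes, with $Z_\infty$ equal to the number of such maxima, matching the claimed closed form for $P^\infty$.

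For the quantitative bound, set $r=\max\{P^{{\bf K}_L,\boldsymbol{\alpha}_L,\boldsymbol{\beta}}(\omega'')/M : P^\infty(\omega'')=0\}$, which is strictly less than $1$. The normalized denominator is sandwiched by
$$Z_\infty\;\leq\;\sum_{\omega'}(P^{{\bf K}_L,\boldsymbol{\alpha}_L,\boldsymbol{\beta}}(\omega')/M)^{2^{i-L}}\;\leq\; Z_\infty+|\Omega_\Lambda|\,r^{2^{i-L}},$$
so a short case split on whether $\omega$ is an argmax (in which case $P^\infty(\omega)=1/Z_\infty$) or not (in which case $P^\infty(\omega)=0$) yields $|P^{{\bf K}_i}(\omega)-P^\infty(\omega)|\leq|\Omega_\Lambda|\,r^{2^{i-L}}$; this is exactly the stated bound once one recalls that any $\omega'$ with $P^\infty(\omega')>0$ has $P^{{\bf K}_L}(\omega')=M$, and setting $a=r^{2^{-L}}\in(0,1)$ gives $a^{2^i}=r^{2^{i-L}}$. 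The only delicate point is verifying that $\boldsymbol{\beta}$-symmetry propagates down the tree (it does, by a direct check on (2)), since this is precisely what reduces the entire iteration to squaring-and-normalizing, and hence the limit statement to the concentration of a Boltzmann-type measure at inverse temperature $2^{i-L}$ onto its ground states.
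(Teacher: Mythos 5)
Your proof is correct and follows essentially the same route as the paper: reduce the inessential foldings to the squaring-and-normalizing recursion via symmetry, obtain the closed form $P^{{\bf K}_i}(\omega)\propto P^{{\bf K}_L}(\omega)^{2^{i-L}}$ by induction, identify the limit as the uniform measure on the argmaxes, and bound the error by the ratio of sub-maximal to maximal masses raised to the power $2^{i-L}$. Your sandwich on the normalized denominator even handles the non-maximal $\omega$ explicitly, which the paper's proof leaves implicit despite the supremum in the statement, but this is a refinement of detail rather than a different argument.
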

\begin{proof}
let $i>L; since K_i =\emptyset$  all $P^{{\bf K}_i, \boldsymbol{ \alpha}_i, \boldsymbol{\beta}}$'s
are defined on the same $\Omega_{\Lambda}$; each direct descendant is a folding of the
parent distribution, which is already symmetric since $L \geq 1$.
Thus $P^{{\bf K}_{i+1}, \boldsymbol{ \alpha}_{i+1}, \boldsymbol{\beta}}(\omega)
= \frac{(P^{{\bf K}_i, \boldsymbol{ \alpha}_i, \boldsymbol{\beta}}(\omega) )^2}
{\sum_{\omega'} P^{{\bf K}_i, \boldsymbol{ \alpha}_i, \boldsymbol{\beta}}(\omega') )^2 }$.
Hence, if $\omega$ is a point of maximum for $ P^{{\bf K}_L, \boldsymbol{ \alpha}_L, \boldsymbol{\beta}}$, then 
$$
\lim_{i \rightarrow \infty} P^{{\bf K}_i, \boldsymbol{ \alpha}_i, \boldsymbol{\beta}}(\omega) 
= \lim_{i \rightarrow \infty}  
 \frac{(P^{{\bf K}_L, \boldsymbol{ \alpha}_L, \boldsymbol{\beta}}(\omega) )^{2^{i-L}}}
{\sum_{\omega'} (P^{{\bf K}_L, \boldsymbol{ \alpha}_L, \boldsymbol{\beta}}(\omega') )^{2^{i-L}} }
=\frac{1}{Z_{\infty}},
$$
and the limit equals $0$ otherwise. Moreover, for all $\omega$
such that $P^{\infty}(\omega)>0$, as $Z_{\infty}\geq 1$, we have
\begin{eqnarray} 
&&|P^{{\bf K}_i, \boldsymbol{ \alpha}_i, \boldsymbol{\beta}}(\omega) - 
\frac{1}{Z_{\infty}}| \nonumber \\
&& \quad \quad =
| \frac{(P^{{\bf K}_L, \boldsymbol{ \alpha}_L, \boldsymbol{\beta}}(\omega) )^{2^{i-L}}}
{Z_{\infty} (P^{{\bf K}_L, \boldsymbol{ \alpha}_L, \boldsymbol{\beta}}(\omega) )^{2^{i-L}}+
\sum_{\omega'':P^{\infty}(\omega'')=0} (P^{{\bf K}_L, \boldsymbol{ \alpha}_L, \boldsymbol{\beta}}(\omega'') )^{2^{i-L}} }- 
\frac{1}{Z_{\infty}}|
 \nonumber\\
&& \quad \quad \leq \frac{ 
\sum_{\omega'':P^{\infty}(\omega'')=0} (P^{{\bf K}_L, \boldsymbol{ \alpha}_L, \boldsymbol{\beta}}(\omega'') )^{2^{i-L}} }{ (P^{{\bf K}_L, \boldsymbol{ \alpha}_L, \boldsymbol{\beta}}(\omega) )^{2^{i-L}}}
 \nonumber\\
&& \quad \quad \leq |\Omega_{\Lambda}| \frac{ \max_{\omega'': P^{\infty}(\omega'')=0} 
(P^{{\bf K}_L, \boldsymbol{ \alpha}_L, \boldsymbol{\beta}}(\omega'') )^{2^{i-L}} }
{(P^{{\bf K}_L, \boldsymbol{ \alpha}_L, \boldsymbol{\beta}}(\omega) )^{2^{i-L}}} .\nonumber
\end{eqnarray}
As $P^{{\bf K}_L, \boldsymbol{ \alpha}_L, \boldsymbol{\beta}}(\omega)
> P^{{\bf K}_L, \boldsymbol{ \alpha}_L, \boldsymbol{\beta}}(\omega'')$ for all
the finite number of $\omega''$'s we can take $a<1$.

\end{proof}

\subsection{Essential tree of foldings} 
An {\bf essential folding} is either a folding in which $K_i \neq \emptyset$
or the first folding (which  is "essential" by itself as it simmetrizes the distribution);
the other foldings are inessential.
The {\bf essential tree of  foldings} is a subtree of the tree of foldings such that
in each branch the first foldings are all essential and the remaining ones are
all inessential; in other words, it does not alternate between the two types of foldings.

We now see that the essential tree of foldings contains all the information of the
tree of foldings.
\begin{lemma}\label{essential foldings}
If $  P^{{\bf K}_{n}, \boldsymbol{ \alpha}_{n}, \boldsymbol{\beta}} $
is a folding then there is a folding in the generation of the essential tree of foldings
equal to it.
\end{lemma}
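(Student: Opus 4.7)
The plan is to show that any essential folding step commutes with any adjacent inessential one, and then to bubble sort the sequence of $n$ foldings that defines $P^{{\bf K}_n,\boldsymbol{\alpha}_n,\boldsymbol{\beta}}$ so that all essential steps come first and all inessential ones last. The sorted sequence parameterizes a vertex at the $n$-th generation of the essential tree, and the commutation guarantees that it defines the same folded measure.

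The key technical input is to identify an inessential step ($K_i=\emptyset$ with $i\ge 2$) with squaring-and-renormalization: after the first, always essential, folding the distribution is symmetric binary, so (\ref{2}) reduces to $Q^{\emptyset,\alpha}(\omega)\propto Q(\omega)\,Q(\overline\omega^{\boldsymbol{\beta}})=Q(\omega)^2$, exactly as in Lemma~\ref{5.1.3}. Given this, I would verify the commutation by direct computation: applied to a symmetric binary $Q$ on $\Omega_M$, an essential step with data $(K,\alpha)$ followed by an inessential step produces, up to normalization, $\bigl[Q(\alpha\omega_{M\setminus K})\,Q(\alpha\overline\omega_{M\setminus K}^{\boldsymbol{\beta}})\bigr]^2$, and the reverse ordering produces $Q(\alpha\omega_{M\setminus K})^2\,Q(\alpha\overline\omega_{M\setminus K}^{\boldsymbol{\beta}})^2$, which is the same expression.

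With the commutation in hand, I would apply a bubble sort to the original sequence: whenever an inessential step precedes an essential one, swap them. After finitely many swaps, all essential steps sit at the front in their original relative order, and all inessential ones at the back; this is precisely the description of a vertex at the $n$-th generation of the essential tree. The disjointness constraint $K_i\subseteq(\cup_{j<i}K_j)^c$ is automatically preserved, since empty $K_i$'s contribute nothing to the unions and the relative order of the nonempty $K_i$'s is unchanged.

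The only nontrivial step is isolating and writing the commutation identity cleanly; after that the sorting argument is routine. A minor subtlety worth noting in the write-up is that the first folding is essential by decree (it is what introduces $\boldsymbol{\beta}$ and symmetrizes $P$) even when $K_1=\emptyset$, so swaps are performed only from position two onward, which is precisely where the distribution is already symmetric binary and the commutation computation applies.
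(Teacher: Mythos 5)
Your proposal is correct and follows essentially the same route as the paper: the paper likewise identifies an inessential step with squaring-and-renormalizing the (already symmetric) distribution, verifies by the same direct computation that an adjacent essential/inessential pair commutes, and concludes by moving all nonempty $K_i$'s (after the mandatory first folding) to the front of the sequence. Your explicit remarks about the bubble-sort bookkeeping and the preservation of the constraint $K_i\subseteq(\cup_{j<i}K_j)^c$ are details the paper leaves implicit, but the argument is the same.
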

\begin{proof}
If ${\bf K}_{n}=(K_1, K_2, \dots, K_n)$ then let $K_{i_1}, K_{i_2}, \dots, K_{i_r}$
be the nonempty $K_i$'s among those with $i\geq 2$. We see that  for 
${\bf K'}_{n}=(K_1, K_{i_1}, K_{i_2}, \dots, K_{i_r},\emptyset, \dots, \emptyset)$,
and $ \boldsymbol{ \alpha'}_{n}$ similarly rearranged,
 $  P^{{\bf K}_{n}, \boldsymbol{ \alpha}_{n}, \boldsymbol{\beta}} 
 = P^{{\bf K'}_{n}, \boldsymbol{ \alpha'}_{n}, \boldsymbol{\beta}} $.
 In fact, all we have to show is that, after the first folding, exchanging an essential and 
 an inessential folding gives the same probability. After the first folding
 the probabilities are symmetric; therefore, assuming that $P$ is already a folding,
 we have
 \begin{eqnarray}
 P^{(K_1, \emptyset), (\alpha_1, \alpha),  \boldsymbol{\beta}}(\omega)&=&
 \frac{1}{Z_2}
 P^{K_1, \alpha_1,  \boldsymbol{\beta}}( \omega)
 P^{K_1, \alpha_1,  \boldsymbol{\beta}}( \overline \omega) \nonumber \\
 &=&  \frac{1}{Z_2}
 P^{K_1, \alpha_1,  \boldsymbol{\beta}}( \omega)^2
  \nonumber \\
 &=& \frac{1}{Z_2} ( \frac{1}{Z_1} P(\alpha_1 \omega)
 P(\alpha_1 \overline \omega) )^2 \nonumber \\
 &=&
\frac{( P(\alpha_1 \omega))^2
 (P(\alpha_1 \overline \omega) )^2}{\sum_{\omega'} ( P(\alpha_1 \omega'))^2
 (P(\alpha_1 \overline \omega') )^2}  \nonumber \\
 &=&
\frac{\frac{( P(\alpha_1 \omega))^2}{\sum_{\omega''} ( P(\alpha_1 \omega''))^2}
 \frac{(P(\alpha_1 \overline \omega) )^2}{\sum_{\omega''} ( P(\alpha_1 \omega''))^2}
    }{\sum_{\omega'}   \frac{P(\alpha_1 \omega'))^2}{\sum_{\omega''} ( P(\alpha_1 \omega''))^2}
 \frac{ (P(\alpha_1 \overline \omega') )^2}{\sum_{\omega''} ( P(\alpha_1 \omega''))^2}}  \nonumber \\
 &=&\frac{
 P^{\emptyset, \alpha}(\alpha_1 \omega) P^{\emptyset, \alpha}(\alpha_1 \overline \omega) 
    }{\sum_{\omega'} P^{\emptyset, \alpha}(\alpha_1 \omega') P^{\emptyset, \alpha}(\alpha_1 \overline \omega')  }  \nonumber \\
 &=&P^{( \emptyset,K_1), ( \alpha,\alpha_1),  \boldsymbol{\beta}}(\omega) \nonumber
 \end{eqnarray} 
 
\end{proof}
We restrict from now on to the essential tree of foldings. Note, in particular, 
that the essential tree of foldings has at most $(2^{|\Lambda|} |\Omega_{\Lambda}|)^{|\Lambda|}
< \infty$ infinite branches. In addition, in each branch there are at most
$L =|\Lambda | $ initial essential foldings, and then all the others are inessential.

\subsection{RCR's in the tree of foldings} 
Given $\Lambda, \mathcal B \subseteq \mathcal P(\lambda)$, and a probability $P$ on 
$ \mathcal P(\lambda)$, various foldings of $P$ can admit a $\mathcal B$-RCR. Some relevant
cases are as follows.
\begin{defi} \label{2.3}
We say that $P$ has a {\bf finite tree $\mathcal B$-RCR},
 if there exists a finite subtree $T$ of the
tree of foldings such that for each leaf 
$({\bf K}, \boldsymbol{ \alpha}, \boldsymbol{\beta})$ of $T$
there exists a $\mathcal B$-RCR $ \nu_{{\bf K}, \boldsymbol{ \alpha}, \boldsymbol{\beta}}$
of $ P^{{\bf K}, \boldsymbol{ \alpha}, \boldsymbol{\beta}}$.

For $\epsilon>0$, we say that $P$ has a {\bf finite tree $\epsilon$-$\mathcal B$-RCR },
 if there exists a finite subtree $T$ of the
tree of foldings such that for each leaf $({\bf K}, \boldsymbol{ \alpha}, \boldsymbol{\beta})$ of
$T$
there exists a $\epsilon$-$\mathcal B$-RCR $ \nu_{{\bf K}, \boldsymbol{ \alpha}, \boldsymbol{\beta}}$
of $ P^{{\bf K}, \boldsymbol{ \alpha}, \boldsymbol{\beta}}$.

We say that $P$ has an {\bf   $\mathcal B$-RCR at infinity} if 
for each
infinite branch in the essential tree of branches
 the limiting distribution $P^{\infty}$ in that branch has a 
 $\mathcal B$-RCR.
 
 We say that $P$ has an {\bf  asymptotic
  $\mathcal B$-RCR} if there exists an integer valued function
  $n_{\epsilon}$ such that $\lim_{\epsilon \rightarrow 0} 2^{n_{\epsilon}} \epsilon =0$
for which the following happens: 
for  every $\epsilon >0$,
$P$ has a finite tree $\epsilon$-$\mathcal B$-RCR on
the  subtree $T_{\epsilon}$ formed by the essential tree of foldings
  truncated after $n_{\epsilon}$ generations.

\end{defi}

\begin{lemma} \label{5.1.4}
If $P$ has a $\mathcal B$-RCR $\nu_{\infty}$ at infinity, then $\nu_{\infty}$
is also an
 asymptotic
  $\mathcal B$-RCR .
 
 \end{lemma}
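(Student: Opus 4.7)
The plan is to exploit the super-exponential convergence of folded distributions provided by Lemma \ref{5.1.3} together with the observation, made right after Lemma \ref{essential foldings}, that the essential tree of foldings has only finitely many infinite branches. For each infinite branch $\gamma$, all foldings past some depth $L_\gamma \leq |\Lambda|$ are inessential, so Lemma \ref{5.1.3} provides the limit $P^\infty_\gamma$ together with a constant $a_\gamma < 1$. By hypothesis, each $P^\infty_\gamma$ admits a $\mathcal{B}$-RCR $\nu_\infty^\gamma$. Because the collection of infinite branches is finite, I would set $a := \max_\gamma a_\gamma$, still strictly less than $1$, to obtain a uniform rate.

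Given $\epsilon > 0$, I would then choose
$$n_\epsilon := |\Lambda| + \left\lceil \log_2 \frac{\log(|\Omega_\Lambda|/\epsilon)}{\log(1/a)} \right\rceil,$$
so that $n_\epsilon \geq |\Lambda|$ (which guarantees that truncating at depth $n_\epsilon$ is past the essential part of every branch) and $|\Omega_\Lambda|\, a^{2^{n_\epsilon - |\Lambda|}} \leq \epsilon$. With this choice one has $2^{n_\epsilon} = O(\log(1/\epsilon))$, hence $2^{n_\epsilon}\epsilon \to 0$ as $\epsilon \to 0$, which is the quantitative requirement in Definition \ref{2.3}.

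Next, at each leaf of the truncated essential subtree $T_\epsilon$—which, being past depth $|\Lambda|$, lies on a unique infinite branch $\gamma$, since inessential foldings offer no branching freedom—I would attach the RCR $\nu_\infty^\gamma$ supplied at infinity. By Lemma \ref{5.1.3},
$$\left|\,P^{{\bf K}_{n_\epsilon},\boldsymbol{\alpha}_{n_\epsilon},\boldsymbol{\beta}}(\omega) - P^\infty_\gamma(\omega)\,\right| \leq \epsilon$$
uniformly in $\omega$, and since $\nu_\infty^\gamma$ is an exact $\mathcal{B}$-RCR of $P^\infty_\gamma$, the previous inequality translates directly into the defining $\epsilon$-$\mathcal{B}$-RCR identity for $\nu_\infty^\gamma$ at that leaf.

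The main technical point to watch is that Lemma \ref{5.1.3} is phrased for $\omega$ in the support of $P^\infty_\gamma$; for $\omega$ in the zero set of $P^\infty_\gamma$ one needs the complementary estimate $P^{{\bf K}_i}(\omega) \leq a^{2^{i-L}}$, which however follows at once from the explicit formula in the proof of Lemma \ref{5.1.3}, since the numerator is raised to the same power as a strictly smaller quantity than any term in the denominator. Beyond this routine extension, the argument is bookkeeping on the essential tree, where the finiteness of the set of infinite branches is what permits the uniform choice of the rate $a$.
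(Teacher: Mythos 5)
Your argument is correct and follows essentially the same route as the paper's proof: both use the super-exponential bound of Lemma \ref{5.1.3} to pick a truncation depth $n_\epsilon$ at which the leaf distributions are within $\epsilon$ of their branch limits (whose exact RCRs then serve as $\epsilon$-RCRs), and both verify $2^{n_\epsilon}\epsilon \to 0$ from the doubly exponential decay. Your version is somewhat more explicit than the paper's on three points the paper leaves implicit — the uniform choice of the rate $a$ over the finitely many infinite branches, the concrete formula for $n_\epsilon$, and the estimate at configurations where $P^\infty$ vanishes — but these are refinements, not a different argument.
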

 
 \begin{proof}
 Let $P$ have a $\mathcal B$-RCR at infinity, and consider $\epsilon >0$.
 Recall that in each infinite branch there are at most $L$  essential 
 foldings at the beginning, and choose $n=n_{\epsilon}$ the smallest integer such that
 $$
 a^{2^n}= |  \Omega_{\Lambda} | \frac{ \max_{\omega'': P^{\infty}(\omega')=0} 
(P^{{\bf K}_L, \boldsymbol{ \alpha}_L, \boldsymbol{\beta}}(\omega'') )^{2^{n-L}} }
{(P^{{\bf K}_L, \boldsymbol{ \alpha}_L, \boldsymbol{\beta}}(\omega') )^{2^{n-L}}}
\leq \epsilon$$
for each $\omega$ with $P^{\infty}(\omega) >0$,
where $a$ is taken as in Lemma  \ref{5.1.3}.
Since $P^{\infty}$ has a $\mathcal B$-RCR by assumption, and it approximates
$P^{{\bf K}_L, \boldsymbol{ \alpha}_L, \boldsymbol{\beta}}$ by less than
$a^{2^n} < \epsilon$ by  Lemma \ref{5.1.3}, it follows that
$P^{{\bf K}_L, \boldsymbol{ \alpha}_L, \boldsymbol{\beta}}$ has a
$\epsilon$-$\mathcal B$-RCR using $T=T_{\epsilon}$ as finite subtree.
In addition, as $n_{\epsilon}$ is chosen to be the smallest integer,
$\epsilon \approx a^{2^{n_{\epsilon}}}$, and 
$\lim_{\epsilon \rightarrow 0} 2^{n_{\epsilon}} \epsilon 
= \lim_{\epsilon \rightarrow 0}2^{n_{\epsilon}} a^{2^{n_{\epsilon}}}
=0$ as $a < 1$ from Lemma  \ref{5.1.3}, so that $P$ 
has an asymptotic
  $\mathcal B$-RCR .
 
 \end{proof} 
 
\section{FKG theory}

\subsection{Positive association}

A probability $P$ is positively associated  (PA) if for all increasing
$A, B \subseteq \Omega_{\Lambda}$, $P(A \cap B) \geq P(A) P(B)$.
Equivalently, for all increasing
$A$ and decreasing $B $, $P(A \cap B) \leq P(A) P(B)$.

We are going to find a very simple sufficient condition for positive 
association in terms of RCR. To this purpose, 
given a finite set $\Lambda$ and ordered sets $F_i$, we denote by
$\hat \omega_{\Lambda}$ the configuration such that 
$(\hat \omega_{\Lambda})_i = \max \{s: s \in F_i\}$.

If $|b| \leq 2$ for all $b \in \mathcal B(\Lambda)$ and the $F_i$'s are
all ordered sets, then a $\mathcal B$-RCR $\nu$ is {\bf ferromagnetic} if
for $b=\{i,j\}$,
$\hat \omega_b \in \eta_b$ for all $\eta_b \neq \emptyset$. Recall that if $|F_i|=2$ we say that $\nu$ is symmetric 
if $\omega_b \in \eta_b$ implies $\overline \omega_b \in \eta_b$.
We can assume that $F_i=\{0,1\}$, in case after relabeling of the elements of $F_i$; in such case,
a symmetric and ferromagnetic $\mathcal B$-RCR with $|b| \leq 2$ for
all $b$ is such that either $\eta_b = \Omega_b$ or $\eta_b = \{\{0,0\},\{1,1\}\}$,
analogously to what happens in the original FK representation (see Section 3.2).

\begin{theorem} \label{5.2.x}
Let  $F_i$ be ordered and $P$ be a probability on $\Omega_{\Lambda}$.
Suppose that there is a finite subtree $T$ of the tree of foldings such 
that each leaf of $T$ has a symmetric ferromagnetic $\mathcal B$-RCR
with $|b| \leq 2$ for all $b \in \mathcal B(\Lambda)$; then $P$ is PA.
\end{theorem}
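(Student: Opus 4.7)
The plan is to prove positive association $P(A \cap B) \geq P(A) P(B)$ for increasing $A, B$ by combining a leaf-level analysis via the symmetric ferromagnetic RCR with an inductive climb back to the root through $T$. First, reformulate PA as $P(A \cap B') \leq P(A) P(B')$ for $A$ increasing and $B'$ decreasing. Using the selection rule $\Psi$ of Example \ref{SelectionForIncreasingDecreasing} and taking $K = \omega^{-1}(1)$, $L = \omega^{-1}(0)$ gives $A \cap B' \subseteq A \nb B'$, so it suffices to prove $P(A \nb B') \leq P(A) P(B')$.

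At a leaf $Q = P^{{\bf K}, \boldsymbol{\alpha}, \boldsymbol{\beta}}$ the hypothesis supplies a symmetric ferromagnetic $\mathcal{B}$-RCR $\nu$ with $|b| \leq 2$. Every active hyperbond has $\eta_b = \{(0,0),(1,1)\}$, forcing $\omega_i = \omega_j$, so active clusters are monochromatic and $\Psi$ automatically uses disjoint clusters of $\nu$ (since $K \subseteq \omega^{-1}(1)$ and $L \subseteq \omega^{-1}(0)$ cannot share a cluster). Lemma \ref{2.3.2} with $\epsilon = 0$ then delivers the leaf-level inequality $Q(A \nb B) \leq Q(A \cap \overline{B})$.

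To propagate this from the leaves up to $P$, I would run an induction with the \emph{stronger} inductive claim that each node of $T$ is itself PA, using the identity (derivable as in the proof of Lemma \ref{2.3.3})
\begin{equation*}
P(A \cap B) - P(A) P(B) = \sum_{K, \alpha, \boldsymbol{\beta}} \bigl[ P^{K, \alpha, \boldsymbol{\beta}}(R(A) \cap R(B)) - P^{K, \alpha, \boldsymbol{\beta}}(R(A) \cap \overline{R(B)}) \bigr] (P \times P)(W_{K, \alpha, \boldsymbol{\beta}}).
\end{equation*}
For $A, B$ increasing and a symmetric folding $P^{K, \alpha, \boldsymbol{\beta}}$ that is already PA by induction, the equality $P^{K, \alpha, \boldsymbol{\beta}}(\overline{R(B)}) = P^{K, \alpha, \boldsymbol{\beta}}(R(B))$ combined with PA gives the chain
\begin{equation*}
P^{K, \alpha, \boldsymbol{\beta}}(R(A) \cap R(B)) \geq P^{K, \alpha, \boldsymbol{\beta}}(R(A)) P^{K, \alpha, \boldsymbol{\beta}}(R(B)) = P^{K, \alpha, \boldsymbol{\beta}}(R(A)) P^{K, \alpha, \boldsymbol{\beta}}(\overline{R(B)}) \geq P^{K, \alpha, \boldsymbol{\beta}}(R(A) \cap \overline{R(B)}),
\end{equation*}
so each summand is nonnegative and PA of every folding lifts to PA of the parent. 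Iterating upward through $T$ reduces everything to PA at the leaves.

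The hard part will be the base case: establishing PA directly at a leaf from a symmetric ferromagnetic $\mathcal{B}$-RCR with $|b|\leq 2$. I plan to write $Q$ as the cluster mixture $\sum_\eta \tilde\nu(\eta) \mu_\eta$, where $\mu_\eta$ is the uniform product measure on the monochromatic cluster spins of $\eta$ and $\tilde\nu(\eta) \propto \nu(\eta) 2^{c(\eta)}$. Harris--FKG applied to each product measure $\mu_\eta$ handles $\mu_\eta(A \cap B) \geq \mu_\eta(A) \mu_\eta(B)$; the remaining positive covariance $\mathrm{Cov}_{\tilde\nu}(\mu_\eta(A), \mu_\eta(B)) \geq 0$ then reduces to the FKG lattice property of $\tilde\nu$, exactly the classical $q = 2$ random-cluster FKG property induced by the $2^{c(\eta)}$ reweighting. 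I expect this covariance step to be the most technically delicate piece; the inductive climb through $T$ is comparatively routine once the leaf is handled.
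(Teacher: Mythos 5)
Your inductive step is sound and is essentially the paper's argument repackaged: the decomposition identity over the $W_{\fe}$'s, together with symmetry of every folding and the chain $P^{\fe}(R(A)\cap R(B))\ge P^{\fe}(R(A))P^{\fe}(R(B))=P^{\fe}(R(A))P^{\fe}(\overline{R(B)})\ge P^{\fe}(R(A)\cap\overline{R(B)})$, correctly lifts positive association one level up the tree. The genuine gap is your base case. You propose to prove PA \emph{at the leaves} from the symmetric ferromagnetic RCR via a Harris-plus-random-cluster-FKG argument, but that implication is false under the stated hypotheses: the base $\nu$ is an arbitrary measure on $H$, not a Bernoulli product, so $\tilde\nu(\eta)\propto\nu(\eta)2^{c(\eta)}$ has no reason to satisfy the FKG lattice condition and the covariance term can be negative. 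Concretely, on $\Lambda=\{1,2,3,4\}$ take $\nu$ uniform on the two configurations $\eta^{(1)},\eta^{(2)}$ with $\eta^{(1)}_{\{1,2\}}=\{(0,0),(1,1)\}$, $\eta^{(1)}_{\{3,4\}}=\Omega_{\{3,4\}}$ and the roles swapped for $\eta^{(2)}$; this is symmetric and ferromagnetic with $|b|\le 2$, yet the represented $Q$ gives $Q(\omega_1=\omega_2=1,\ \omega_3=\omega_4=1)=\tfrac18<\tfrac{9}{64}=Q(\omega_1=\omega_2=1)\,Q(\omega_3=\omega_4=1)$, so $Q$ is not PA. (Even in the Bernoulli case, FKG for the $q=2$ random-cluster measure is normally proved via Holley, i.e.\ via the very theorem this paper is re-deriving, so the route would be circular in spirit.)

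The repair is already contained in your own write-up: the correct base case is not PA of the leaves but the weaker inequality $Q(A\nb B)\le Q(A\cap\overline B)$ that you obtained from Lemma \ref{2.3.2} via the disjoint-cluster/Reimer argument. Feed that inequality into your decomposition identity at the \emph{parent} of a leaf: each summand $P^{\fe}(R(A)\cap R(B))-P^{\fe}(R(A)\cap\overline{R(B)})$ is nonnegative because $R(A)\cap R(B)=R(A)\nb R(B)$ under the selection rule of Example \ref{SelectionForIncreasingDecreasing}, so the parent is PA without the leaf ever being PA. Your displayed chain then converts PA of that parent (plus its symmetry) back into the inequality $P^{\fe}(C\cap D)\le P^{\fe}(C\cap\overline D)$ for increasing $C$ and decreasing $D$, which is exactly what the next level up needs. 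So the invariant carried up the tree must be this intersection inequality (the paper's \eqref{5.2.1}), with PA appearing only as a by-product at each internal node and finally at the root; with that substitution your proof closes.
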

\begin{proof} For each leaf 
$ ({\bf K}, \boldsymbol{ \alpha}, \boldsymbol{\beta})$,
$ P^{{\bf K}, \boldsymbol{ \alpha}, \boldsymbol{\beta}}$ is defined
on $\prod_i \{ \beta_i(1), \beta_i(2)\}$, which can be relabeled 
into $F_i=\{0,1\}$. The proof is divided in various steps.

(I) For each leaf 
$ ({\bf K}, \boldsymbol{ \alpha}, \boldsymbol{\beta})$ there is a 
$\mathcal B$-RCR $\nu_{{\bf K}, \boldsymbol{ \alpha}, \boldsymbol{\beta}}$
which is symmetric and ferromagnetic. If $\nu(\eta)>0$,
$C(\eta)$ is one of the clusters formed by active hyperbonds in $\eta$,
and $\omega \in \Omega_{ (\cup_{i=1}^n K_i)^c}$,
then either $C(\eta) \subseteq \omega^{-1}(1)$ or 
$C(\eta) \subseteq \omega^{-1}(0)$ as a bond $b=\{i,j\}$ with
$\omega_i \neq \omega_j$ can never be active.

Let $\Psi$ be a selection rule which only selects pairs of sets $(M, N)$
with $M \subseteq \omega^{-1}(1)$ and $N \subseteq \omega^{-1}(0)$; 
under this selection rule the event $A$ can only be recognized by $1$'s
and the event $B$ by $0$'s. Therefore, at most one of $C(\eta)\cap M \neq \emptyset$
or $C(\eta)\cap N \neq \emptyset$ is possible, hence $\Psi$ uses disjoint
clusters of $\eta$. Lemma \ref{2.3.2} with $\epsilon =0$ gives  that for all increasing
$A$ and decreasing $B$ we have that for each leaf
\begin{eqnarray} \label{5.2.1}
P^{{\bf K}, \boldsymbol{ \alpha}, \boldsymbol{\beta}} (A\cap B)
= P^{{\bf K}, \boldsymbol{ \alpha}, \boldsymbol{\beta}}(A \nb B)
\leq P^{{\bf K}, \boldsymbol{ \alpha}, \boldsymbol{\beta}}(A \cap \overline B).
\end{eqnarray}

(II) 
Let $ ({\bf K}, \boldsymbol{ \alpha}, \boldsymbol{\beta})$ be a node
of the tree of foldings, and assume that for its direct descendant 
\eqref{5.2.1} holds with the appropriate symbol replacements
to indicate the descendant. It descendants
are also obtained from a folding, so Lemma \ref{2.3.3} applies with  
$\Lambda =(\cup_{i=1}^n K_i)^c $ and $\epsilon =0$ to see that for
 $A$ increasing, $B$ decreasing in $\Omega_{ (\cup_{i=1}^n K_i)^c}$,
 for each node of the tree of foldings
 whose direct descendants satisfy \eqref{5.2.1} we have
 \begin{eqnarray} \label{5.2.2}
P^{{\bf K}, \boldsymbol{ \alpha}, \boldsymbol{\beta}} (A\cap B)
= P^{{\bf K}, \boldsymbol{ \alpha}, \boldsymbol{\beta}}(A \nb B)
\leq P^{{\bf K}, \boldsymbol{ \alpha}, \boldsymbol{\beta}}(A )
P^{{\bf K}, \boldsymbol{ \alpha}, \boldsymbol{\beta}}( B).
\end{eqnarray}
This gives the required inequality, but for the node $ ({\bf K}, \boldsymbol{ \alpha}, \boldsymbol{\beta})$. 
To get the result for $P$ we have to backtrack a little and bootstrap things.

(III) Suppose that for a node 
$ ({\bf K}, \boldsymbol{ \alpha}, \boldsymbol{\beta})$ 
of the tree of foldings all of its direct descendant satisfy
\eqref{5.2.1}. If $B$ is decreasing then $\overline B$ is increasing, and 
$(\overline B)^c$ is again decreasing. Then it follows from \eqref{5.2.2}
that 
 \begin{eqnarray}
P^{{\bf K}, \boldsymbol{ \alpha}, \boldsymbol{\beta}} (A\cap (\overline B)^c)
\leq P^{{\bf K}, \boldsymbol{ \alpha}, \boldsymbol{\beta}}(A )
P^{{\bf K}, \boldsymbol{ \alpha}, \boldsymbol{\beta}}((\overline B)^c),
\end{eqnarray}
which yields
\begin{eqnarray} \label{5.2.33}
P^{{\bf K}, \boldsymbol{ \alpha}, \boldsymbol{\beta}} (A\cap \overline B)
\geq P^{{\bf K}, \boldsymbol{ \alpha}, \boldsymbol{\beta}}(A )
P^{{\bf K}, \boldsymbol{ \alpha}, \boldsymbol{\beta}}(\overline B).
\end{eqnarray}
By symmetry of each folding after the first one, for all $B$, 
$P^{{\bf K}, \boldsymbol{ \alpha}, \boldsymbol{\beta}}(\overline B)
=P^{{\bf K}, \boldsymbol{ \alpha}, \boldsymbol{\beta}}( B)$. Then,
for all increasing $A$ and decreasing $B$ we have that for a vertex 
$ ({\bf K}, \boldsymbol{ \alpha}, \boldsymbol{\beta})$ 
of the tree of foldings other than the root, all of  whose direct descendant satisfy
\eqref{5.2.1} it holds
 \begin{eqnarray} \label{5.2.4}
P^{{\bf K}, \boldsymbol{ \alpha}, \boldsymbol{\beta}} (A\cap B)
&\leq& P^{{\bf K}, \boldsymbol{ \alpha}, \boldsymbol{\beta}}(A )
P^{{\bf K}, \boldsymbol{ \alpha}, \boldsymbol{\beta}}( B) \\
&=&P^{{\bf K}, \boldsymbol{ \alpha}, \boldsymbol{\beta}}(A )
P^{{\bf K}, \boldsymbol{ \alpha}, \boldsymbol{\beta}}( \overline B)
\leq  P^{{\bf K}, \boldsymbol{ \alpha}, \boldsymbol{\beta}}(A \cap \overline B),
\nonumber
\end{eqnarray}
which is again \eqref{5.2.1}, but now brought up one step.

(IV) As  \eqref{5.2.1} holds for all the leaves, and if it holds for all descendants
it holds also for the parent node, provided this is different  from the root,
then  \eqref{5.2.1}  holds for all nodes, including thus all descendants of 
the root $\bf 0$.

(V) The final step is for the original $P$. By (IV), we can apply Lemma \ref{2.3.3}
to get that for increasing $A$ and decreasing $B$,
$P(A\cap B) = P(A \nb B) \leq P(A) P(B)$
so that $P$ is PA as required.
\end{proof}

\begin{lemma} \label{5.2.2}
Let  $F_i$ be ordered and $P$ be a probability on $\Omega_{\Lambda}$.
Suppose $P$ has an asymptotic $\mathcal B$-RCR such that 
  each leaf of $T_{\epsilon}$ 
 has a symmetric ferromagnetic $\epsilon$-$\mathcal B$-RCR 
with $|b| \leq 2$ for all $b \in \mathcal B(\Lambda)$; then $P$ is PA.

 \end{lemma}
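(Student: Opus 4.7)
The plan is to run the same argument as in Theorem~\ref{5.2.x} but with the $\epsilon$-approximate versions of Lemmas~\ref{2.3.2} and \ref{2.3.3}, tracking how the error accumulates as one climbs the truncated subtree $T_\epsilon$, and then invoking the asymptotic condition $\lim_{\epsilon\to 0} 2^{n_\epsilon}\epsilon = 0$ to pass to the limit.

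First, at each leaf of $T_\epsilon$, I would apply Lemma~\ref{2.3.2} (with its $\epsilon$ replaced by $2^{|\Lambda|+1}\epsilon$, so that the assumed $\epsilon$-$\mathcal B$-RCR plays the role of the $\frac{\epsilon'}{2^{|\Lambda|+1}}$-RCR in the hypothesis). Exactly as in step~(I) of the previous proof, the symmetric ferromagnetic hypothesis with $|b|\le 2$ forces every cluster to be monochromatic, so the selection rule that restricts to $M\subseteq \omega^{-1}(1),\ N\subseteq \omega^{-1}(0)$ uses disjoint clusters. This yields at every leaf the approximate inequality
\begin{equation} \label{leafbound}
P^{{\bf K},\boldsymbol\alpha,\boldsymbol\beta}(A\cap B) = P^{{\bf K},\boldsymbol\alpha,\boldsymbol\beta}(A\nb B) \le P^{{\bf K},\boldsymbol\alpha,\boldsymbol\beta}(A\cap \overline B) + C\epsilon,
\end{equation}
with $C$ depending only on $|\Lambda|$.

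Second, I would propagate \eqref{leafbound} up the tree inductively. At a node whose direct descendants each satisfy \eqref{leafbound} with error $\delta$, Lemma~\ref{2.3.3} (applied in the complement of the frozen coordinates) gives $P^{\text{node}}(A\nb B) \le P^{\text{node}}(A)P^{\text{node}}(B) + \delta$. Mimicking step~(III), applied to $A$ and the decreasing event $(\overline B)^c$ (which is legal since $\overline B$ is increasing), one obtains $P^{\text{node}}(A\cap \overline B) \ge P^{\text{node}}(A)P^{\text{node}}(\overline B) - \delta$. Since every folding after the first is symmetric, the identity $P^{\text{node}}(\overline B) = P^{\text{node}}(B)$ is \emph{exact} and introduces no error, so combining the two bounds yields $P^{\text{node}}(A\cap B) \le P^{\text{node}}(A\cap \overline B) + 2\delta$; thus each level of climbing at worst doubles the error.

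Third, after at most $n_\epsilon$ levels, the error at the direct descendants of the root is bounded by $2^{n_\epsilon}C\epsilon$. A single final application of Lemma~\ref{2.3.3} passes from those descendants to $P$ (here step~(III) is not needed, since we only want $P(A\cap B)\le P(A)P(B)+\text{error}$ and not the flipped form, which is fortunate because $P$ itself need not be symmetric). This gives
\[
P(A\cap B) = P(A\nb B) \le P(A)P(B) + C'\,2^{n_\epsilon}\epsilon.
\]
Letting $\epsilon\to 0$ and using the asymptotic hypothesis $\lim_{\epsilon\to 0} 2^{n_\epsilon}\epsilon=0$ produces $P(A\cap B)\le P(A)P(B)$ for all increasing $A$ and decreasing $B$, which is the PA condition.

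The main obstacle is the bookkeeping of the error: one must verify that the propagation at each internal node costs only a factor of $2$ (not more), which in turn depends critically on the fact that the symmetry identity $P^{\text{node}}(\overline B)=P^{\text{node}}(B)$ holds exactly at every non-root node. The super-exponential convergence of foldings guaranteed by Lemma~\ref{5.1.3}, reflected in the definition of asymptotic $\mathcal B$-RCR, is exactly what matches this doubling and closes the argument.
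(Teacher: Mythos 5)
Your proposal is correct and follows essentially the same route as the paper's proof: apply the approximate version of Lemma~\ref{2.3.2} at the leaves of $T_{\epsilon}$, observe that each level of backtracking via Lemma~\ref{2.3.3} and the exact symmetry identity $P(\overline B)=P(B)$ at most doubles the error, accumulate a total error of order $2^{n_{\epsilon}}\epsilon$ over the $n_{\epsilon}$ generations, and let the asymptotic condition $\lim_{\epsilon\to 0}2^{n_{\epsilon}}\epsilon=0$ finish the argument. The error bookkeeping, including the factor of $2$ per level and the constant $2^{|\Lambda|+1}$ from the leaf inequality, matches the paper's treatment.
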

 \begin{proof}  
 The proof is a repetition of the proof of Theorem \ref{5.2.x} with some modifications.
 Applying Lemma \ref{2.3.2} with the error term, \eqref{5.2.1} is replaced by
 \begin{eqnarray} \label{5.2.5}
P^{{\bf K}, \boldsymbol{ \alpha}, \boldsymbol{\beta}} (A\cap B)
= P^{{\bf K}, \boldsymbol{ \alpha}, \boldsymbol{\beta}}(A \nb B)
\leq P^{{\bf K}, \boldsymbol{ \alpha}, \boldsymbol{\beta}}(A \cap \overline B)
\pm \epsilon 2^{|\Lambda|+1}.
\end{eqnarray}
Then \eqref{5.2.33} is replaced by
\begin{eqnarray} \label{5.2.66}
P^{{\bf K}, \boldsymbol{ \alpha}, \boldsymbol{\beta}} (A\cap \overline B)
\geq P^{{\bf K}, \boldsymbol{ \alpha}, \boldsymbol{\beta}}(A )
P^{{\bf K}, \boldsymbol{ \alpha}, \boldsymbol{\beta}}(\overline B) \pm \epsilon 2^{|\Lambda|+1}
\end{eqnarray}
and \eqref{5.2.4} is replaced by
 \begin{eqnarray} \label{5.2.77}
P^{{\bf K}, \boldsymbol{ \alpha}, \boldsymbol{\beta}} (A\cap B)
\leq P^{{\bf K}, \boldsymbol{ \alpha}, \boldsymbol{\beta}}(A \cap \overline B)
\pm 2  \epsilon 2^{|\Lambda|+1}.
\end{eqnarray}
Notice the extra factor of $2$ due to the possible summation of the errors in 
\eqref{5.2.66} and \eqref{5.2.77}.

Following Part (IV) above, since $T_{\epsilon} $ has 
$n_{\epsilon}$ generations we have that the direct descendants of the root
$\bf 0$ satisfy
 \begin{eqnarray} \label{5.2.8}
P^{{\bf K}, \boldsymbol{ \alpha}, \boldsymbol{\beta}} (A\cap B)
\leq  P^{{\bf K}, \boldsymbol{ \alpha}, \boldsymbol{\beta}}(A \cap \overline B)
+ 2^{n_{\epsilon}} \epsilon 2^{|\Lambda|+1},
\nonumber
\end{eqnarray}
and following Part (V) $P(A\cap B)  \leq P(A) P(B) + 2^{n_{\epsilon}} \epsilon 2^{|\Lambda|+1}.$
By definition of asymptotic representability, $\lim_{\epsilon \rightarrow 0} 2^{n_{\epsilon}} \epsilon
=0$, so that $P(A\cap B)  \leq P(A) P(B)$.

 \end{proof}
 Combining Lemmas \ref{5.1.4} and \ref{5.2.2} 
 we get the final result of this section:
\begin{theorem} \label{5.2.3}
Let  $F_i$ be ordered and $P$ be a probability on $\Omega_{\Lambda}$.
If $P$ has a  $\mathcal B$-RCR at infinity which is ferromagnetic and
symmetric, with $|b| \leq 2$ for all $b \in \mathcal B(\Lambda)$, then $P$ is PA.
\end{theorem}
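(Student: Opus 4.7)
The plan is to prove Theorem~\ref{5.2.3} by chaining Lemmas~\ref{5.1.4} and \ref{5.2.2}, so there is very little new content, only a careful transfer of the structural hypotheses (symmetric, ferromagnetic, $|b|\le 2$) from the limit RCR to the approximate ones.

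First, I would apply Lemma~\ref{5.1.4} directly: since by assumption $P$ admits a $\mathcal{B}$-RCR at infinity, namely a $\mathcal{B}$-RCR $\nu_\infty$ of the limiting distribution $P^\infty$ on every infinite branch of the essential tree of foldings, the lemma produces from these the same $\nu_\infty$'s as an asymptotic $\mathcal{B}$-RCR. Concretely, for every $\epsilon>0$ the subtree $T_\epsilon$ obtained by truncating the essential tree of foldings after $n_\epsilon$ generations has, at each leaf $({\bf K},\boldsymbol{\alpha},\boldsymbol{\beta})$, the distribution $P^{{\bf K},\boldsymbol{\alpha},\boldsymbol{\beta}}$ which is within $a^{2^{n_\epsilon}}\le\epsilon$ in sup-norm from the corresponding $P^\infty$ (by Lemma~\ref{5.1.3}); hence the base probability $\nu_\infty$ of the branch serves as an $\epsilon$-$\mathcal{B}$-RCR of this leaf, with $\lim_{\epsilon\to 0} 2^{n_\epsilon}\epsilon = 0$ as established in the proof of Lemma~\ref{5.1.4}.

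Next, I need to observe that the structural hypotheses transfer without change. By assumption $\nu_\infty$ is symmetric, ferromagnetic, and supported on hyperbonds $b\in\mathcal{B}(\Lambda)$ with $|b|\le 2$. Since these properties are properties of the base probability $\nu_\infty$ alone (they concern the sets $\eta_b$ and the collection $\mathcal{B}$, not the quality of the approximation to $P$), the very same $\nu_\infty$ used as an $\epsilon$-$\mathcal{B}$-RCR at each leaf of $T_\epsilon$ is symmetric and ferromagnetic with $|b|\le 2$. Thus the asymptotic $\mathcal{B}$-RCR provided by Lemma~\ref{5.1.4} satisfies exactly the hypothesis of Lemma~\ref{5.2.2}.

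Finally, Lemma~\ref{5.2.2} yields that $P$ is PA, concluding the proof. The only point that requires attention, and the closest thing to an obstacle, is verifying that the three structural properties really are intrinsic to the base $\nu_\infty$ and not to the exact RCR relation—but since Definition~\ref{2.3} defines all four notions (finite tree, finite tree $\epsilon$-, at infinity, asymptotic) through the base probabilities of the same type, the transfer is immediate and the argument reduces to invoking the two lemmas in sequence.
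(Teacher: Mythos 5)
Your proposal is correct and is exactly the paper's argument: the paper proves Theorem~\ref{5.2.3} by simply combining Lemmas~\ref{5.1.4} and~\ref{5.2.2}, and your additional observation that the symmetric, ferromagnetic, $|b|\le 2$ properties are intrinsic to the base $\nu_\infty$ and therefore carry over to the $\epsilon$-RCRs at the leaves of $T_\epsilon$ is precisely the (implicit) glue the paper relies on.
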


\subsection{FKG theorem} 
The FKG theorem, which we are going to show at the end of this section,
states that if a probability on a lattice satisfies the FKG condition then
it is PA.
We say that $P$ {\bf satisfies the FKG condition} if 
for all $\omega, \omega' \in \Omega_{\Lambda}=
\prod_{i \in \Lambda} F_i$, $F_i$ ordered sets, $|F_i| \leq 2$ 
\begin{eqnarray} \label{2.2.2}
P(\omega \vee \omega')P(\omega \wedge \omega') 
\geq P(\omega) P(\omega')
\end{eqnarray}
Recall that $\hat \omega_i= \max_{x \in F_i} x$. The FKG condition
takes a nice form in the foldings.
\begin{lemma} \label{2.2.1}
$P$ satisfies the FKG condition if and only if for
every $K \subseteq \Lambda, \alpha \in \Omega_K, 
\beta(1), \beta(2) \in \Omega_{K^c}, \beta_i(1) \neq \beta_i(2),
\omega_{K^c} \in \Omega_{K^c}$
\begin{eqnarray} \label{2.2.3}
P^{\fe}(\omega_{K^c}) 
\leq P^{\fe}(\hat \omega_{K^c}).
\end{eqnarray}

\end{lemma}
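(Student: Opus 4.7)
The plan is to unwind the definition of the folded measure $P^{\fe}$ in \eqref{2} and recognize \eqref{2.2.3} as just the FKG inequality applied to a carefully chosen pair $(\omega,\omega')$, with $K$ playing the role of the agreement set and $\boldsymbol{\beta}$ encoding the disagreement locations. Since the folded measure is supported on configurations that, restricted to $K^c$, take values in $\{\beta_i(1),\beta_i(2)\}$, the whole argument lives inside a binary sublattice at each site, which is exactly where the lattice operations $\vee,\wedge$ behave in the cleanest way.

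For the forward direction ($\Rightarrow$), assume $P$ satisfies the FKG condition, and fix $K,\alpha,\boldsymbol{\beta},\omega_{K^c}$. I set $\omega := \alpha\,\omega_{K^c}$ and $\omega' := \alpha\,\overline{\omega}^{\boldsymbol{\beta}}_{K^c}$. By construction $\omega$ and $\omega'$ agree on $K$ (both equal $\alpha$) and on $K^c$ differ at every site, taking opposite values in $\{\beta_i(1),\beta_i(2)\}$. The pointwise join is then $\omega\vee\omega' = \alpha\,\hat\omega_{K^c}$ and the meet is $\omega\wedge\omega' = \alpha\,\overline{\hat\omega}^{\boldsymbol{\beta}}_{K^c}$. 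Applying \eqref{2.2.2} and dividing both sides by the normalizer $Z_{\fe}$ of \eqref{2} gives exactly \eqref{2.2.3}.

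For the converse ($\Leftarrow$), given any $\omega,\omega' \in \Omega_{\Lambda}$, I define the agreement set $K := \{i\in\Lambda : \omega_i = \omega'_i\}$, let $\alpha := \omega|_K = \omega'|_K$, and on $K^c$ set $\beta_i(1) := \omega_i$ and $\beta_i(2) := \omega'_i$; these are pointwise different by construction. Then with $\omega_{K^c} := \omega|_{K^c}$ one has $\alpha\,\omega_{K^c} = \omega$ and $\alpha\,\overline{\omega}^{\boldsymbol{\beta}}_{K^c} = \omega'$, while $\alpha\,\hat\omega_{K^c} = \omega\vee\omega'$ and $\alpha\,\overline{\hat\omega}^{\boldsymbol{\beta}}_{K^c} = \omega\wedge\omega'$. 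Multiplying \eqref{2.2.3} by $Z_{\fe}$ then reads as the FKG condition for this particular $(\omega,\omega')$, and since the pair is arbitrary the FKG condition holds.

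Almost nothing can go wrong, so there is no genuine obstacle: the argument is a straightforward bookkeeping exercise once the dictionary between $(K,\alpha,\boldsymbol{\beta})$ and $(\omega,\omega')$ is set up. The only mild subtlety is the degenerate case $Z_{\fe}=0$, where \eqref{2.2.3} is vacuous but the forward inequality still holds because at least one of the $P$-values on the left is zero; and the implicit assumption that $\hat\omega_i \in \{\beta_i(1),\beta_i(2)\}$ for $i\in K^c$, which is automatic in the binary setting $|F_i|=2$ under which the FKG condition is stated.
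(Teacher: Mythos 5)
Your proof is correct and follows essentially the same route as the paper's: the same dictionary $\omega=\alpha\,\omega_{K^c}$, $\omega'=\alpha\,\overline{\omega_{K^c}}$ with $\omega\vee\omega'=\alpha\,\hat\omega_{K^c}$ and $\omega\wedge\omega'=\alpha\,\overline{\hat\omega_{K^c}}$ in both directions, with the normalizer cancelling. Your added remarks on the degenerate $Z_{\fe}=0$ case and on $\hat\omega_i\in\{\beta_i(1),\beta_i(2)\}$ in the binary setting are correct and slightly more careful than the paper's own write-up.
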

\begin{proof} 
If \eqref{2.2.2} is satisfied, then for all 
$K, \alpha, \beta(1), \beta(2), \omega_{K^c}$ 
let $\omega = \alpha \hat \omega_{K^c}$
and $\omega'= \alpha \overline{ \omega_{K^c}}$. We have
 \begin{eqnarray} \label{2.2.4}
 \omega=\alpha \omega_{K^c}, 
 \quad \omega'=\alpha \overline { \omega_{K^c}},
\quad  \omega \vee \omega'=\alpha \hat \omega_{K^c},
\quad  \omega \wedge \omega'= 
\alpha \overline{\hat \omega_{K^c}}.
\end{eqnarray}
Thus 
\begin{eqnarray} \label{2.2.5}
P^{\fe}( \omega_{K^c})
 &=& P(\alpha \omega_{K^c})P( \alpha \overline{ \omega_{K^c}})
=P(\omega) P(\omega')
 \nonumber  \\
&\leq&
  P(\omega \vee \omega')P(\omega \wedge \omega') 
   \nonumber  \\
&=&
P(\alpha \hat \omega_{K^c})P( \alpha  \overline{\hat \omega_{K^c}}) 
=P^{\fe}( \hat \omega_{K^c}) .
\end{eqnarray}

Viceversa,
if $P$ satisfies \eqref{2.2.3} and  $\omega, \omega' \in \Omega_{\Lambda}$,
let $K$ be the set of $i \in \Lambda$ in which the two configurations 
$\omega, \omega'$ are equal; let also
$\alpha_i = \omega_i$ for $i \in K$, $\beta_i(1)=\omega_i$,
 $\beta_i(2)=\omega'_i$ for $i \in K^c$, $(\omega_{K^c})_i
 = \omega_i$ for $i \in K^c$.
 Then
  \eqref{2.2.4}
still holds and 
 the \eqref{2.2.5}
can be used in reversed order to show that \eqref{2.2.2}  holds.
\end{proof}

\begin{lemma}
$P$ satisfies the FKG condition if and only if for
all foldings $(\fe)$, $P^{\fe} $ satisfies the FKG condition as well.
\end{lemma}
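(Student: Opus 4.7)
The plan is to prove the two implications separately, exploiting in both directions the same simple observation: on the folded configuration space $\prod_{i\in K^c}\{\beta_i(1),\beta_i(2)\}$ (ordered coordinate-wise via the order inherited from the $F_i$'s, each of which has $|F_i|\le 2$), the reversal $u\mapsto\overline u^{\boldsymbol{\beta}}$ is an order-reversing involution, so it swaps coordinate-wise max and min:
\[
\overline{u\vee v}^{\boldsymbol{\beta}}=\overline u^{\boldsymbol{\beta}}\wedge\overline v^{\boldsymbol{\beta}},\qquad \overline{u\wedge v}^{\boldsymbol{\beta}}=\overline u^{\boldsymbol{\beta}}\vee\overline v^{\boldsymbol{\beta}}.
\]
Also, the concatenation operator $\alpha(\cdot)$ commutes with $\vee$ and $\wedge$, since both operands agree on $K$ with $\alpha$. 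These two identities together with the defining formula \eqref{2} for $P^{\fe}$ are all I need.

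For the forward direction, assume $P$ satisfies the FKG condition and fix a folding $(K,\alpha,\boldsymbol{\beta})$ and $u,v\in\Omega_{K^c}$ in its domain. I would apply the FKG inequality for $P$ twice: once to the pair $(\alpha u,\alpha v)$ and once to $(\alpha\overline u^{\boldsymbol{\beta}},\alpha\overline v^{\boldsymbol{\beta}})$. Using the commutation identities above, the former gives $P(\alpha(u\vee v))P(\alpha(u\wedge v))\ge P(\alpha u)P(\alpha v)$ and the latter gives $P(\alpha\overline{u\wedge v}^{\boldsymbol{\beta}})P(\alpha\overline{u\vee v}^{\boldsymbol{\beta}})\ge P(\alpha\overline u^{\boldsymbol{\beta}})P(\alpha\overline v^{\boldsymbol{\beta}})$. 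Multiplying the two inequalities and regrouping the four factors on each side as products of the form $P(\alpha\cdot)P(\alpha\overline{\cdot}^{\boldsymbol{\beta}})$, I divide by $Z_{\fe}^2$ and read off exactly $P^{\fe}(u\vee v)P^{\fe}(u\wedge v)\ge P^{\fe}(u)P^{\fe}(v)$, which is the FKG condition for $P^{\fe}$.

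For the reverse direction, given arbitrary $\omega,\omega'\in\Omega_{\Lambda}$ I would set $K=\{i:\omega_i=\omega'_i\}$ (the case $K=\Lambda$ is trivial), $\alpha=\omega|_K$, and $\boldsymbol{\beta}=(\omega|_{K^c},\omega'|_{K^c})$; this is a legitimate choice since $\omega_i\neq\omega'_i$ on $K^c$. Writing $u=\omega|_{K^c}$ and $v=\omega'|_{K^c}$, one has $v=\overline u^{\boldsymbol{\beta}}$, hence by definition $P^{\fe}(u)=P^{\fe}(v)=Z_{\fe}^{-1}P(\omega)P(\omega')$, while a direct application of the two commutation identities shows $P^{\fe}(u\vee v)=P^{\fe}(u\wedge v)=Z_{\fe}^{-1}P(\omega\vee\omega')P(\omega\wedge\omega')$. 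Applying the assumed FKG inequality for $P^{\fe}$ to the pair $(u,v)$ and taking the positive square root then yields $P(\omega\vee\omega')P(\omega\wedge\omega')\ge P(\omega)P(\omega')$, the FKG condition for $P$.

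I do not foresee a substantive obstacle: both directions are essentially bookkeeping around the identity that reversal on a two-point fibre swaps $\vee$ and $\wedge$. The only points that need a little care are the assumption $|F_i|\le 2$ (which is what makes the involution order-reversing), and the degenerate case $K=\Lambda$ in the reverse direction, where the statement to be proved collapses to $P(\omega)^2\ge P(\omega)^2$.
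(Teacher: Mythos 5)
Your proof is correct and takes essentially the same route as the paper: the forward direction is the paper's computation verbatim (apply the FKG inequality of $P$ to the pair $(\alpha u,\alpha v)$ and to the reversed pair, then use that the reversal on two-point fibres swaps $\vee$ and $\wedge$), and your reverse direction applies the folded FKG condition to a configuration and its reversal in the folding adapted to $(\omega,\omega')$, which is the paper's argument with Lemma~\ref{2.2.1} (FKG is equivalent to maximality of $\hat\omega$ in every folding) inlined rather than cited. No gaps; your explicit handling of the degenerate case $K=\Lambda$ and of the role of $|F_i|\le 2$ is if anything slightly more careful than the paper's.
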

\begin{proof}
For all $(\fe)$, and for all $u \in K^c$, $\omega(1), \omega(2) \in \Omega_{K^c}$
\begin{eqnarray} 
\overline{ \omega_u(1) \vee \omega_u(2)} &=& 
\overline{ \omega_u(1) }\wedge \overline{ \omega_u(2)} \nonumber \\
\overline{ \omega_u(1) \wedge \omega_u(2)} &=& 
\overline{ \omega_u(1) }\vee \overline{ \omega_u(2)} . \nonumber
\end{eqnarray}
If $P$ is FKG then
\begin{eqnarray} 
&& P^{\fe}(  \omega(1) \vee \omega(2))P^{\fe}(  \omega(1) \wedge \omega(2)) \nonumber  \\
 && \quad =
 P(\alpha (  \omega(1) \vee \omega(2)) )
 P(\alpha \overline {(  \omega(1) \vee \omega(2))} )
  P(\alpha (  \omega(1) \wedge \omega(2)) )
 P(\alpha \overline {(  \omega(1) \wedge \omega(2))} ) \nonumber  \\
 && \quad =
 P(\alpha (  \omega(1) \vee \omega(2)) )
 P(\alpha ( \overline {  \omega(1)} \wedge \overline {  \omega(2)} ) )
  P(\alpha (  \omega(1) \wedge \omega(2)) )
 P(\alpha (\overline {  \omega(1)} \vee \overline {\omega(2)} ) )
 \nonumber  \\
&&\quad \geq P(\alpha   \omega(1)) P(\alpha \omega(2)) 
 P(\alpha  \overline{ \omega(1)}) P(\alpha \overline{ \omega(2)}) 
 \nonumber  \\
&&\quad = P^{\fe}(  \omega(1) )P^{\fe}(   \omega(2)) 
   \nonumber .
\end{eqnarray}
Viceversa, if for all $(\fe)$, $P^{\fe}$ satisfies the FKG condition,
then for $\omega \in \Omega_{K^c}$, and, by symmetry of the foldings
\begin{eqnarray} 
(P^{\fe}( \hat \omega))^2
 &=& P^{\fe}( \omega \vee \overline{\omega} )P^{\fe}( \omega \wedge \overline{\omega} )
 \nonumber  \\
&\geq&
  P^{\fe}( \omega  )P^{\fe}( \overline{\omega} )= (P^{\fe}(  \omega))^2
   \nonumber  .
\end{eqnarray}
Hence $P$ is FKG by Lemma \ref{2.2.1}.

\end{proof}
It follows that if $P$ satisfies the FKG conditions then so do all its iterated foldings, 
and the probability $P^{\infty} $ in the limit of each infinite branch. Each of the
$P^{\infty} $ is a two-valued, symmetric distribution uniform on some subset $D \subseteq
\Omega_{(\cup_{n \in \mathbb N})^c}$; we see now that all probabilities
of this form which satisfy the FKG condition are products of independent clusters, so that
they have a ferromagnetic and symmetric $\mathcal B$-RCR made of pairs.
\begin{theorem} \label{5.3.3}
If $P$ is two-valued, symmetric, uniform on $D \subseteq
\Omega_{\Lambda}=\prod_{u \in \Lambda}\{0,1\}$ for some set $\Lambda$, and satisfies the FKG
condition, then $P$ has a ferromagnetic and symmetric $\mathcal B$-RCR
such that $|b| \leq 2$ for all $b \in \mathcal B$.
\end{theorem}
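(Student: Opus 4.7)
\medskip

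\noindent\textbf{Proof plan.} The strategy is to exploit the combination of uniformity, symmetry, and the FKG condition to extract a rigid combinatorial structure on $D$, and then to build the RCR by hand.

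\medskip

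First I would show that $D$ is a \emph{Boolean subalgebra} of $\{0,1\}^{\Lambda}$, viewed via the usual identification $\omega \leftrightarrow \omega^{-1}(1) \subseteq \Lambda$. Uniformity of $P$ on $D$ means $P(\omega) P(\omega') > 0$ iff $\omega,\omega' \in D$; the FKG condition $P(\omega\vee\omega')P(\omega\wedge\omega') \ge P(\omega)P(\omega')$ therefore forces $\omega\vee\omega',\omega\wedge\omega' \in D$, so $D$ is closed under $\vee$ and $\wedge$. Symmetry gives closure under $\omega \mapsto \overline{\omega}$, and since $\omega \vee \overline\omega$ (resp.\ $\omega\wedge\overline\omega$) is the all-ones (resp.\ all-zeros) configuration, $D$ contains both the top and bottom elements. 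Hence $D$ is a finite Boolean subalgebra of $\mathcal{P}(\Lambda)$.

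\medskip

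Next I would extract the partition structure. Define on $\Lambda$ the equivalence relation $i \sim j$ iff $\omega_i = \omega_j$ for every $\omega \in D$, and let $A_1,\dots,A_k$ be its classes. The standard representation of a finite Boolean subalgebra of $\mathcal P(\Lambda)$ gives that its atoms are precisely the $A_j$'s and that
\begin{equation*}
D = \{\omega \in \{0,1\}^\Lambda : \omega \text{ is constant on each } A_j\}.
\end{equation*}
The nontrivial inclusion $\supseteq$ follows from the fact that each $A_j$, being an atom of the Boolean algebra, corresponds to an element of $D$ (the indicator $\mathbb I_{A_j}$), and unions of atoms—obtained via $\vee$—exhaust all constant-on-atoms configurations. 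In particular $|D| = 2^k$.

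\medskip

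Finally, I would build the RCR explicitly. For each class $A_j$ with $|A_j| \ge 2$, pick any spanning tree on $A_j$, and let $\mathcal B$ be the union over $j$ of the edges of these spanning trees; all elements of $\mathcal B$ are pairs, so $|b| \le 2$. Let $\nu$ be the Dirac mass at the single configuration $\eta$ given by $\eta_b = \{(0,0),(1,1)\}$ for every $b \in \mathcal B$. Then $\eta_b$ is invariant under coordinate flip (so $\nu$ is symmetric in the sense $\nu(\eta)=\nu(\overline\eta)$) and contains $(1,1) = \hat\omega_b$ (so $\nu$ is ferromagnetic). A configuration $\omega$ satisfies $\omega \sim \eta$ iff $\omega_i = \omega_j$ for every edge $\{i,j\}$ of each spanning tree, which by connectedness of the trees is iff $\omega$ is constant on each $A_j$, i.e.\ iff $\omega \in D$. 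Therefore
\begin{equation*}
\frac{1}{Z_1}\sum_{\eta' \in H, \eta'\sim\omega}\nu(\eta') \;=\; \frac{1}{|D|}\mathbb I_{\omega \in D} \;=\; P(\omega),
\end{equation*}
which is the desired RCR.

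\medskip

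\noindent\textbf{Expected main obstacle.} All the work is in the structural step: verifying that $D$ equals the set of constant-on-atoms configurations, i.e.\ that closure under $\vee$, $\wedge$ and complementation together with containing the top and bottom of $\{0,1\}^\Lambda$ really pins down a Boolean subalgebra generated by a partition. Once this is in hand, the construction of $\mathcal B$ and $\nu$ is immediate and the four required properties (symmetry, ferromagneticity, $|b|\le 2$, and reproducing $P$) are direct checks.
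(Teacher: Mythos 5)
Your proof is correct and follows essentially the same route as the paper's: both identify the equivalence classes $i \sim j$ iff $\omega_i = \omega_j$ for all $\omega \in D$, show (using FKG-closure of $D$ under $\vee,\wedge$ together with symmetry) that $D$ is exactly the set of configurations constant on the classes, and then realize $P$ by a deterministic, symmetric, ferromagnetic pair-RCR encoding that partition. The only difference is packaging: you cite the atomic structure of finite Boolean subalgebras of $\mathcal P(\Lambda)$, whereas the paper proves the equivalent combinatorial fact by hand via its lemma that a symmetric sublattice of $\{0,1\}^m$ separating points must be all of $\{0,1\}^m$.
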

Note  that  the reverse also holds, but we omit  the proof
for brevity,
 
\begin{proof}
If $P$ is as in the thesis, then consider $\mathcal B
=\{ b: b=\{u,v\}, u,v \in \Lambda\}$, and  let $\bar \eta \in 
\prod_{b \in \mathcal B} \Omega_b$ be defined by
$$
\bar \eta_{\{u,v\}}=
\begin{cases}
\{(1,1),(0,0)\} \quad \text{ if for all } \omega \in D, \omega_u = \omega_v \\
\Omega_b \quad \text{ if there exists } \omega \in D \text{ such that } \omega_u \neq \omega_v
\end{cases}
$$
Clearly, if $\omega \in D$ and $\bar \eta_{\{u,v\}} = \{(1,1),(0,0)\}$
then $\omega_u=\omega_v$; thus, $\omega \sim \bar \eta$. We then let
$\nu := \delta_{\bar \eta}$; clearly, $\nu$ is ferromagnetic and symmetric. We claim 
that it is a $\mathcal B$-RCR of $P$.

For all $\omega \in D$, $P(\omega)=\frac{1}{Z} \sum_{\eta \sim \omega} \nu(\eta)
=\frac{\nu(\bar \eta)}{Z} 
= \frac{1}{Z}$;  if we show that for $\omega \notin D$, $\sum_{\eta \sim \omega} \nu(\eta)=0$,
which is to say $\eta \not\sim \omega$,
then indeed $Z=|D|$ and $P$ would be uniform on $D$, which is the thesis. 
It is then sufficient to prove that if $\omega$ is constant on the $\bar \eta$ clusters,
then $\omega \in D$.

Let $\Omega(\bar \eta) =\{\omega: \omega \text{ is constant on the } \bar \eta \text{ clusters } \}
$; clearly, $D \subseteq \Omega(\bar \eta)$ and we want to show that equality holds.
Let $C_1(\bar \eta), \dots, C_m(\bar \eta)$ be the cluster defined by the
active bonds of $\bar \eta$, and let
$t:\Omega(\bar \eta) \rightarrow \{0,1\}^m$ be defined by $(t(\omega))_j=\omega_u$
for $u \in C_j(\bar \eta)$. Observe that $t(D)=\{\tau \in \{0,1\}^m:
\text{there exists } \omega  \in \Omega(\bar \eta) \text{ with } t(\omega)=\tau \}$
satisfies
\begin{enumerate}
\item $t(D)$ is a sublattice of $ \{0,1\}^m$: in fact, for $\tau(1), \tau(2) \in t(D)$,
$\tau(1)=t(\omega(1)) , \tau(2)=t(\omega(2)), \omega(1), \omega(2) \in D$,
we have $P(\omega(1))=P(\omega(2))\neq 0$, hence, by the FKG property
 of $P$, $P(\omega(1) \vee \omega(2))=P(\omega(1) \wedge \omega(2))
\neq 0$;
therefore, $\tau(1) \vee \tau(2) = t(\omega(1) \vee \omega(2) ) \in t(D)$
and $\tau(1) \wedge \tau(2) = t(\omega(1) \wedge \omega(2) ) \in t(D)$;
\item $t(D)$ is symmetric: by the symmetry of $P$, if $\tau = t(\omega)$
then $P(\omega)>0$ which implies $P(\overline {\omega})>0$,
hence $\overline \omega \in D$, thus $\overline \tau =t( \overline \omega)
\in t(D)$;
\item $t(D)$ separates points of $M=\{1, \dots, m\}$
where $m$ is the number of clusters defined by active bonds in 
$\bar \eta$, in the sense that for all
$i,j \in M, i \neq j$, there is $\tau^{i,j} \in t(D)$ such that 
$\tau^{i,j}_i \neq \tau^{i,j}_j$: in fact, by definition of $\bar \eta$, if $i \neq j$
and $u \in C_i(\bar \eta), v \in C_j(\bar \eta)$, then there exists $\omega \in D$
such that $\omega_u \neq \omega_v$; hence, $(t(\omega))_i 
=\omega_u \neq \omega_v=(t(\omega))_j$. Therefore, 
$\tau^{i,j} = t(\omega)$ separates $i$ and $j$. Here we have used that
 $\mathcal B
=\{ b: b=\{u,v\}, u,v \in \Lambda\}$.

\end{enumerate}
The next lemma shows that a symmetric sublattice which separates points
coincides with the whole lattice; hence, $t(D) = \{0,1\}^m$, and 
every configuration which is constant on the clusters of $\bar \eta$
is in $D$, which concludes the proof.
\end{proof}

\begin{lemma}
Consider $T=\{0,1\}^m$. If a sublattice $L$ of $T$ is symmetric and 
separates ponts, then $L=T$.

\end{lemma}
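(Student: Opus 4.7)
The plan is to show that $L$ contains every atom $e_k$ of $T=\{0,1\}^m$, namely the vector with a $1$ in coordinate $k$ and $0$'s elsewhere, because closure of $L$ under $\vee$ then sweeps out every nonzero element of $T$ as a join of such atoms, and $\mathbf 0$ comes for free from symmetry.

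First I would dispose of the top and bottom. Since a sublattice is nonempty, picking any $\sigma \in L$ gives $\bar\sigma \in L$ by symmetry, and then $\mathbf 1 = \sigma \vee \bar\sigma$ and $\mathbf 0 = \sigma \wedge \bar\sigma$ both lie in $L$. This also settles the trivial case $m=1$, so I may assume $m \geq 2$ from here on.

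Next, I fix an index $k \in \{1,\dots,m\}$ and, for every $j \neq k$, invoke the separating hypothesis to get $\tau^{k,j} \in L$ with $\tau^{k,j}_k \neq \tau^{k,j}_j$. Using symmetry, I replace $\tau^{k,j}$ by its complement if needed so that $\tau^{k,j}_k = 1$ and $\tau^{k,j}_j = 0$. Taking the meet
$$
\rho^{(k)} := \bigwedge_{j \neq k} \tau^{k,j} \in L
$$
then produces a configuration which is $1$ at coordinate $k$ (every term is) and $0$ at each coordinate $j \neq k$ (because the $j$-th term vanishes there), so $\rho^{(k)} = e_k$. Any $\tau \in T$ with $\tau \neq \mathbf 0$ can then be written as $\bigvee_{k:\, \tau_k = 1} e_k \in L$, and $\mathbf 0 \in L$ was already established, so $L = T$.

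The only real question is a conceptual one, namely identifying the right intermediate target; once it is recognized that the atoms $e_k$ should be assembled first, the symmetry normalization of the separating witnesses followed by a single meet yields each $e_k$, and the rest is pure closure under joins. I do not foresee any serious obstacle.
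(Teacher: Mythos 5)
Your proof is correct and is essentially the paper's argument run in dual form: the paper uses symmetry to orient each separating witness so that $\tau^{(j)}_i=0$, $\tau^{(j)}_j=1$, joins them to obtain the co-atoms (vectors with a single $0$), and then reaches general elements by a downward induction on the number of zeros using meets, whereas you orient the witnesses the other way, meet them to obtain the atoms $e_k$, and recover every element as a single join of atoms. The mechanism is identical; your dualization has the minor advantage of dispensing with the induction step entirely.
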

\begin{proof}
Since $L$ separates points, $L\neq \emptyset$; let $\tau \in L$,
then $\overline \tau \in L$ by symmetry, and $\hat \tau= \tau \vee \overline \tau
\in L$, as it is a lattice, and $\overline{\hat \tau} \in L$ by symmetry.

For $\tau \in T$, let $m(\tau)=|\{ i =1, \dots, m: \tau_i=0\}|$.
If $m(\tau)=0, m$ then $\tau= \hat \tau$ or  $\tau=\overline{ \hat \tau}$, 
so in any case $\tau \in L$. If $m(\tau)=1$ then let $i $ be such that
$\tau_i=0$; for $j \neq i$ let $\tau^{(j)} \in L$ be such that 
separates $i$ and $j$, which is $\tau^{(j)}_i=0, \tau^{(j)}_j=1$. Then
$\tau = \vee_{j \neq i} \tau^{(j)} \in L$.

Finally, let $\overline m = \min \{m': \text{ there exists }
\tau \in T\setminus L \text{ such that }m(\tau)=m'\}$. For $\tau 
\in T \setminus L$ such that $m(\tau)= \overline m$
and $i$ such that $\tau_i=0$, let $\tau^{(i)}$ be such that
$$
(\tau^{(i)})_j=
\begin{cases}
\tau_j  \quad \quad \text{ if } j \neq i \\
1  \quad \quad \text{ if } j = i.
\end{cases}
$$
Then $m(\tau^{(i)})=m(\tau)-1 < \overline m$ and 
$\tau^{(i)} \in L$ by induction. Moreover, $\tau= \wedge_{i: \tau_i=0} \tau^{(i)}$.
Hence, $\tau \in L$, which is a contradiction.

\end{proof}
Our main conclusion
\begin{theorem} \label{5.3.5} (FKG theorem)
If $P$ satisfies the FKG condition then $P$ is PA.
\end{theorem}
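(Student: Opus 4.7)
My plan is to string together the three machines the paper has just developed: the preservation of the FKG condition under foldings, the structural description of the limit $P^\infty$ in an infinite branch (Lemma \ref{5.1.3}), and the structural theorem (Theorem \ref{5.3.3}) for two-valued symmetric FKG distributions. Together these will supply a ferromagnetic symmetric $\mathcal{B}$-RCR at infinity with $|b|\leq 2$, at which point Theorem \ref{5.2.3} immediately yields PA.

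First I would argue that the FKG condition propagates all the way down every infinite branch of the essential tree of foldings. By the lemma immediately following Lemma \ref{2.2.1}, if $P$ satisfies FKG then every one-step folding $P^{K,\alpha,\boldsymbol{\beta}}$ does as well; iterating, every node $P^{{\bf K}_n,\boldsymbol{\alpha}_n,\boldsymbol{\beta}}$ of the tree inherits the FKG condition. Fix an infinite branch and let $L$ be such that $K_i=\emptyset$ for $i\geq L$. By Lemma \ref{5.1.3} the limit $P^\infty$ exists as a pointwise limit of the $P^{{\bf K}_i,\boldsymbol{\alpha}_i,\boldsymbol{\beta}}$, and since the FKG inequality \eqref{2.2.2} is a finite system of polynomial inequalities (hence closed under pointwise limits), $P^\infty$ still satisfies FKG.

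Next, Lemma \ref{5.1.3} tells us exactly that $P^\infty$ is the uniform distribution on the set $D$ of maxima of $P^{{\bf K}_L,\boldsymbol{\alpha}_L,\boldsymbol{\beta}}$; hence $P^\infty$ is two-valued. Moreover, symmetry is already built into every folding past the first one, so $P^\infty$ is symmetric as well (after the relabeling $\{\beta_i(1),\beta_i(2)\}\mapsto\{0,1\}$). Consequently $P^\infty$ meets all the hypotheses of Theorem \ref{5.3.3}, which produces for $P^\infty$ a ferromagnetic symmetric $\mathcal{B}$-RCR with $|b|\leq 2$. Doing this for every infinite branch of the essential tree of foldings shows that $P$ has a $\mathcal{B}$-RCR at infinity that is ferromagnetic and symmetric with $|b|\leq 2$.

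Finally, I plug into Theorem \ref{5.2.3}, which delivers the conclusion that $P$ is PA. The one place genuinely needing a check (rather than a quotation) is the closedness of the FKG inequality under the pointwise limit along a branch and the verification that symmetry and two-valuedness survive in $P^\infty$; both are immediate from the explicit formula for $P^\infty$ in Lemma \ref{5.1.3} together with the fact that every folding after the first one is symmetric. I do not expect any other real obstacle: the heavy lifting (the backward climb up the tree, the error-control through the asymptotic RCR, and the sublattice argument behind Theorem \ref{5.3.3}) has already been done.
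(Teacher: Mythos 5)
Your proposal is correct and follows exactly the paper's argument: FKG is preserved under foldings and hence by the pointwise limit $P^\infty$ in each infinite branch, Theorem \ref{5.3.3} then supplies the ferromagnetic symmetric $\mathcal B$-RCR at infinity, and Theorem \ref{5.2.3} concludes PA. The extra care you take in checking that FKG survives the limit and that $P^\infty$ is two-valued, symmetric and uniform only makes explicit what the paper's proof asserts in passing.
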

\begin{proof}
If $P$ satisfies the FKG condition, then so does the limit of the foldings
down each infinite branch. The limit satisfies then the conditions of 
Theorem \ref{5.3.3} and hence it has a  ferromagnetic and symmetric $\mathcal B$-RCR
such that $|b| \leq 2$ for all $b \in \mathcal B$.
But then Theorem \ref{5.2.3} implies that $P$ is PA.

\end{proof}

\section{Some theory of negative association} 

\subsection{Negative association}
We assume that $\Omega_{\Lambda} = \prod_{i \in \Lambda}
F_i$, with $F_i$ finite and ordered. 
We say that two events $A, B \in \Omega_{\Lambda}$ have
{\bf disjoint support} if there exists $N \subseteq \Lambda$
(not dependent on a configuration)
such that for all $\omega \in A\cap B$,
$[\omega]_N \subseteq A$ and $[\omega]_{N^c} \subseteq B$.
\begin{defi}
$P$ is negatively associated (NA) if for all $A, B \subseteq \Omega_{\Lambda}$,
increasing and with disjoint support, 
$P(A \cap B) \leq P(A) P(B)$

\end{defi}
The theory of negative association is more difficult than that of positive association,
see \cite{P00, BrJ11, BBL09, Br07, DJR07, DR98, KN10, M09}. We develop a version here, 
which, for the most part, reproduces, with 
a completely different strategy which mirrors the one obtained for
positive association, the results in 
 \cite{P00}. The additional difficulty with respect to our 
 FKG theory can be seen from the fact  that a RCR  sufficient to guarantee NA is the mirror
 image of the one for PA, but, in addition, needs to be concentrated on 
 isolated edges.

Let $\mathcal B(\Lambda)$ be such that $|b| \leq 2$ for all $b \in \mathcal B(\Lambda)$,
 let $F_i=\{0,1\}$ for all $i \in \Lambda$, then we say that 
 a $\mathcal B$-RCR $\nu$ is symmetric and {\bf antiferromagnetic} if
for all $b$, $\eta_b=\{ \{(0,1),(1,0) \},\Omega_b \}$.

We say that $\nu$ is {\bf  concentrated on isolated edges}
if $\nu$ is a $\mathcal B$-RCR with $|b| \leq 2$ for all $b \in \mathcal B(\Lambda)$,
and, moreover, 
 if for all $b(1), b(2)
\in \mathcal B$ such that $\eta_{b(1)}=\eta_{b(2)}\neq \Omega_b$
implies $b(1) \cap b(2) =\emptyset$; in other words, active bonds
are isolated with $\nu$ probability one. In \cite{BG13} 
various inequalities of BK type are proven by RCR of foldings of 
a probability  $P$ which are symmetric, antiferromagnetic and
concentrated on isolated edges. We now develop a theory a negative
association using such representation for the limit probability of the trees of foldings.

\begin{theorem} \label{5.4.3}
Let  $F_i$ be ordered and $P$ be a probability on $\Omega_{\Lambda}$.
If $P$ has a  $\mathcal B$-RCR at infinity which is antiferromagnetic,
symmetric and
concentrated on isolated edges,  then $P$ is NA.
\end{theorem}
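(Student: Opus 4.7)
The plan is to mirror the proof of Theorem \ref{5.2.x} (and its asymptotic refinement Lemma \ref{5.2.2}) with three substitutions: the ferromagnetic RCR is replaced by the antiferromagnetic, isolated-edge one; the test class ``\emph{$A$ increasing, $B$ decreasing}'' is replaced by ``\emph{$A,B$ both increasing with disjoint support}''; and the disjoint-clusters condition on the selection rule is verified from antiferromagnetism rather than ferromagnetism. First, by Lemma \ref{5.1.4}, the $\mathcal B$-RCR at infinity $\nu_\infty$ gives an asymptotic $\mathcal B$-RCR; since antiferromagnetism, symmetry, and being concentrated on isolated edges are properties of the support of $\nu_\infty$, they are inherited by the approximating $\epsilon$-$\mathcal B$-RCR's at the leaves of the truncated essential subtree $T_\epsilon$.

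Fix increasing $A, B \subseteq \Omega_\Lambda$ with disjoint support via a partition $N, N^c$ of $\Lambda$, and take the selection rule
$$
\Psi(A,B,\omega) = \{(K,L) : K \subseteq \omega^{-1}(1) \cap N,\ L \subseteq \omega^{-1}(1) \cap N^c,\ [\omega]_K \subseteq A,\ [\omega]_L \subseteq B\}.
$$
Using disjoint support and the increasing property one checks that $A \nb B = A \cap B$ (the witness at $\omega \in A \cap B$ being $(K,L) = (\omega^{-1}(1)\cap N,\ \omega^{-1}(1)\cap N^c)$). At a leaf $({\bf K}, \boldsymbol{\alpha}, \boldsymbol{\beta})$ of $T_\epsilon$, each active hyperbond $b=\{i,j\}$ has $\eta_b = \{(0,1),(1,0)\}$, so any $\omega \sim \eta$ satisfies $\omega_i \neq \omega_j$; hence at most one of $i,j$ belongs to $\omega^{-1}(1) \supseteq K \cup L$. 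Together with isolation of active edges, this shows that $\Psi$ uses disjoint clusters of the RCR, so Lemma \ref{2.3.2} yields the leaf-level inequality
$$
P^{{\bf K}, \boldsymbol{\alpha}, \boldsymbol{\beta}}(A \cap B) = P^{{\bf K}, \boldsymbol{\alpha}, \boldsymbol{\beta}}(A \nb B) \leq P^{{\bf K}, \boldsymbol{\alpha}, \boldsymbol{\beta}}(A \cap \overline B) \pm \epsilon',
$$
with $\epsilon' = O(2^{|\Lambda|+1} \epsilon)$.

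Next I propagate this leaf bound up $T_\epsilon$ by induction, as in the proof of Theorem \ref{5.2.x}. At a node whose direct descendants all satisfy the above inequality for \emph{any} admissible (increasing, disjoint-support) pair, Lemma \ref{2.3.3} applied to $(A,B)$ (noting that the restrictions $R_{{\bf K}, \boldsymbol{\alpha}, \boldsymbol{\beta}}(A), R_{{\bf K}, \boldsymbol{\alpha}, \boldsymbol{\beta}}(B)$ remain increasing with disjoint support via $N \cap K^c, N^c \cap K^c$) gives
$$
P^{\mathrm{node}}(A \cap B) \leq P^{\mathrm{node}}(A)\, P^{\mathrm{node}}(B) \pm \epsilon''.
$$
To recover the leaf-form at the node (so the induction continues), observe that $\overline B^c$ is again increasing with the same disjoint-support partition as $B$; applying the just-derived node bound to the admissible pair $(A, \overline B^c)$ and using the symmetry $P^{\mathrm{node}}(\overline B^c) = 1 - P^{\mathrm{node}}(\overline B) = 1 - P^{\mathrm{node}}(B)$ yields $P^{\mathrm{node}}(A \cap \overline B) \geq P^{\mathrm{node}}(A) P^{\mathrm{node}}(B) \pm \epsilon''$, hence $P^{\mathrm{node}}(A \cap B) \leq P^{\mathrm{node}}(A \cap \overline B) \pm 2\epsilon''$. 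Errors roughly double per level, so after the at most $n_\epsilon$ generations of $T_\epsilon$ and one last application of Lemma \ref{2.3.3} at the root, $P(A \cap B) \leq P(A)P(B) + O(2^{n_\epsilon} \epsilon)$. The asymptotic-representability bound $\lim_{\epsilon \to 0} 2^{n_\epsilon} \epsilon = 0$ then delivers $P(A \cap B) \leq P(A) P(B)$, which is NA.

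The main obstacle is the bootstrap step in the induction. In the PA proof, the class ``$A$ increasing, $B$ decreasing'' is closed under $B \mapsto \overline B^c$, which is what enables the passage from node-level product bound back to node-level leaf-form bound. The NA analog requires verifying the same closure within the class ``both increasing with disjoint support'': the key point is that $\overline B^c$ inherits both the increasingness (because $\overline B$ is decreasing) and the disjoint-support partition (because complementation and bit-flipping act only within the $N^c$-factor of $B = \Omega_N \times B_{N^c}$). One must also verify that this class of pairs is preserved under the restriction operation $R_{{\bf K}, \boldsymbol{\alpha}, \boldsymbol{\beta}}$ when descending the tree of foldings; both checks are routine but essential for the antiferromagnetic analog to go through.
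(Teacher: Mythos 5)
Your proposal is correct and follows essentially the same route as the paper: the paper's proof is exactly a mirroring of Theorem \ref{5.2.x}, Lemma \ref{5.2.2} and Lemma \ref{5.1.4}, with the leaf-level inequality $P(A\cap B)\leq P(A\cap\overline B)$ obtained from the antiferromagnetic, isolated-edge RCR (which the paper simply cites from \cite{BG13} and you rederive via the explicit selection rule and the disjoint-clusters check), and with the bootstrap step justified by noting that $\overline B$ and $(\overline B)^c$ keep disjoint support from $A$ with the same set $N$. The only difference is one of detail, not of substance.
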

\begin{proof}
The proof can be obtained by following the proofs of Theorems \ref{5.2.x}
and \ref{5.2.3} with suitable adaptations.

In particular, it is proven in \cite{BG13} that if all foldings of a
probability $P$ has a $\mathcal B$-RCR  which is antiferromagnetic,
symmetric and
concentrated on isolated edges, with $|b| \leq 2$ for all $b \in \mathcal B(\Lambda)$, then $P$
satisfies the following: for all $A, B$ increasing and with disjoint support, 
$P(A \cap B) \leq P(A \cap \overline B)$. This inequality is approximate
if the representation is only approximate, as one would get
from just having a $\mathcal B$-RCR at infinity.

Part (III) of Theorem \ref{5.2.x} can be followed without modifications, 
by just noting that if $A$ and $B$ have disjoint support, then also
$B^c$ and $\overline B$ have disjoint support from $A$, as seen by
just using the same set $N$.

All the remaining parts of the proofs work without relevant modifications.

\end{proof}

\subsection{Negative FKG theory} 
For a configuration $\omega \in \Omega_{\Lambda} = \{0,1\}$
we indicate $|\omega|=\sum_{i \in \Lambda} \omega_1$ the number of $1$'s
in $\omega$.
\begin{defi}
We say that a probability $P$ satisfies the negative FKG condition, or it is NFKG,
if for every  folding $(K, \alpha)$ it happens that if $\omega_{K^c}
\in \Omega_{K^c}$ satisfies
\begin{eqnarray} \label{5.5.1}
| |\omega_{K^c} | - \frac{|K^c|}{2} | &\leq& 1/2 
\end{eqnarray}
then $P^{K, \alpha} (\omega_{K^c}) \geq 
P^{K, \alpha} (\omega_{K^c}')$ for all 
$\omega_{K^c}' \in 
\Omega_{B_i \cap K^c}$.

\end{defi}
Note that if $\omega_{K^c}$ satisfies  \eqref{5.5.1}, then also  $\overline{\omega_{K^c}}$
does.
With these definitions we want to mirror Theorem \ref{5.3.5}. But this is actually
easier done if we
start from a more restricted .\begin{defi}
We say that a probability $P$ satisfies the strict negative FKG condition, or it is SNFKG,
if for every  folding $(K, \alpha)$ it happens that if $\omega_{K^c}
\in \Omega_{K^c}$ satisfies \eqref{5.5.1} then
\begin{enumerate}
\item $P^{K, \alpha} (\omega_{K^c}) = 
P^{K, \alpha} (\omega_{K^c}')$ for all 
$\omega_{K^c}' \in 
\Omega_{B_i \cap K^c}$ which also satisfy \eqref{5.5.1};
\item if $\omega_{K^c}' $ does not satisfy \eqref{5.5.1},
then $P^{K, \alpha} (\omega_{K^c}) >
P^{K, \alpha} (\omega_{K^c}')$.
\end{enumerate}

\end{defi}
Of course, SNFKG implies NFKG.
\begin{lemma} \label{5.5.2}
IF $P$ is SNFKG  then for each folding
$(K, alpha)$, $P^{K,\alpha}$ is SNFKG.
\end{lemma}
\begin{proof} Let $P$ be SNFKG, and consider $K_1, K_2 \subseteq K_1^c,
\alpha_1 \in \Omega_{K_1^c}, \alpha_2 \in \Omega_{(K_1 \cup K_2)^c}$.
Then we consider the folding $P^{(K_1, K_2), (\alpha_1, \alpha_2)}$ of
$P^{K_1, \alpha_1}$, and two configurations
 $\omega', \tilde{\omega} \in \Omega_{(K_1 \cup K_2)^c}$ such
 that $\omega'$ satisfies \eqref{5.5.1}; 
we have
\begin{eqnarray} 
 P^{(K_1, K_2), (\alpha_1, \alpha_2)}(\omega')&=&
 \frac{1}{Z_2 Z_1^2}
 P(\alpha_1 \alpha_2\omega') P(\alpha_1 \overline{\alpha_2} \overline{\omega'})
 P(\alpha_1 \alpha_2\overline{\omega'})
 P(\alpha_1 \overline{\alpha_2} \omega')
  \nonumber  \\
&\geq & \frac{1}{Z_2 Z_1^2}
 P(\alpha_1 \alpha_2 \tilde \omega) P(\alpha_1 \overline{\alpha_2} \overline{\tilde \omega})
 P(\alpha_1 \alpha_2\overline{\tilde \omega})
 P(\alpha_1 \overline{\alpha_2} \tilde \omega) \nonumber  \\
&= &P^{(K_1, K_2), (\alpha_1, \alpha_2)}(\tilde \omega)
  \nonumber 
  \end{eqnarray}
 as for the folding $(K_1 \cup K_2,\alpha_1\alpha_2)$ 
 \begin{eqnarray} 
 P(\alpha_1 \alpha_2\omega')P(\alpha_1 \alpha_2\overline{\omega'})
 &=&P^{(K_1\cup K_2), (\alpha_1 \alpha_2)}(\omega')
 \nonumber  \\
&\geq &P^{(K_1\cup K_2), (\alpha_1 \alpha_2)}(\tilde \omega)
 \nonumber  \\
&= &P(\alpha_1 \alpha_2 \tilde \omega) P(\alpha_1 \alpha_2\overline{\tilde \omega}),
  \nonumber 
  \end{eqnarray}
 and analogously for the folding $(K_1 \cup K_2,\alpha_1\overline \alpha_2)$,
 with equality if also $\tilde \omega$ satisfies \eqref{5.5.1},
 and strict inequalities otherwise.
 
 \end{proof}

\begin{lemma}  \label{5.5.3}
If $P$  is SNFKG then $P$ has a  
an antiferromagnetic and symmetric $\mathcal B$-RCR at infinity
 concentrated on isolated edges.
\end{lemma}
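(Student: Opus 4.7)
The plan is to mirror the proof of Theorem~\ref{5.3.5} for the negative-FKG setting: first identify $P^{\infty}$ explicitly along each infinite branch using Lemmas~\ref{5.5.2} and~\ref{5.1.3}, and then exhibit the desired RCR by averaging over matchings on the limiting vertex set.

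First, I would propagate SNFKG along foldings. By Lemma~\ref{5.5.2}, every $P^{{\bf K}_i,\boldsymbol{\alpha}_i,\boldsymbol{\beta}}$ is SNFKG; on an infinite branch, after the last essential folding at step $L$, set $\Lambda' = (\cup_{i<L} K_i)^c$, so that $P^{{\bf K}_L,\boldsymbol{\alpha}_L,\boldsymbol{\beta}}$ lives on $\Omega_{\Lambda'}$ and is symmetric (since $L \geq 1$). Applying the SNFKG condition with $K = \emptyset$ to this folded measure identifies its strict maxima as the balanced configurations
\[
D \;=\; \bigl\{\omega \in \Omega_{\Lambda'} : \bigl|\,|\omega| - |\Lambda'|/2\,\bigr| \leq 1/2\bigr\}.
\]
Lemma~\ref{5.1.3} then gives that $P^{\infty}$ is the uniform distribution on $D$.

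Next, I would construct the RCR via a matching mixture. Take $\mathcal{B} = \{b \subseteq \Lambda : |b| \leq 2\}$, and write $|\Lambda'| = 2m$ or $|\Lambda'| = 2m+1$. In the even case let $\mathcal{M}$ be the collection of all perfect matchings of $\Lambda'$; in the odd case let $\mathcal{M}$ be the set of pairs $(v,M)$ with $v \in \Lambda'$ and $M$ a perfect matching of $\Lambda' \setminus \{v\}$. For each element of $\mathcal{M}$ associate $\eta^{M}$ with $\eta^{M}_b = \{(0,1),(1,0)\}$ when $b$ is an edge of $M$ and $\eta^{M}_b = \Omega_b$ on every other bond (singletons and unused pairs), and let $\nu$ be uniform on $\{\eta^{M} : M \in \mathcal{M}\}$. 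By construction the active bonds of each $\eta^{M}$ form a matching, so $\nu$ is concentrated on isolated edges; it is antiferromagnetic by the choice of $\eta^{M}_b$, and symmetric because both $\{(0,1),(1,0)\}$ and $\Omega_b$ are invariant under $0 \leftrightarrow 1$.

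It remains to verify that this $\nu$ recovers $P^{\infty}$. The key observation is that $\omega \sim \eta^{M}$ iff $\omega$ assigns distinct values to the two endpoints of every edge of $M$; this already forces $\omega \in D$. An elementary count then shows that the number of elements of $\mathcal{M}$ compatible with a fixed $\omega \in D$ equals $m!$ in the even case and $(m+1)\cdot m! = (m+1)!$ in the odd case, independent of $\omega$. Hence $\sum_{M} \nu(\eta^{M})\,\mathbb{I}_{\omega \sim \eta^{M}}$ is constant on $D$ and zero off $D$, so after normalization we obtain precisely the uniform distribution on $D$, i.e.\ $P^{\infty}$. The main obstacle is the odd case: SNFKG admits two distinct values of $|\omega|$ in $D$, so no single perfect matching can realize $P^{\infty}$; randomizing the unpaired vertex $v$ is exactly what balances the weights between $\{|\omega| = m\}$ and $\{|\omega| = m+1\}$, via the identity $(m+1)\cdot m! = (m+1)!$ holding on each side.
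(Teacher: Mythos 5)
Your proof is correct and follows essentially the same route as the paper: identify $P^{\infty}$ as the uniform measure on the balanced configurations via Lemma~\ref{5.5.2}, take $\nu$ uniform on complete pairings (perfect or near-perfect matchings) with antiferromagnetic active bonds, and check that every balanced $\omega$ is compatible with the same number $\lceil |\Lambda'|/2\rceil!$ of pairings while no unbalanced $\omega$ is compatible with any. Your treatment of the odd case, with the explicit count $(m+1)\cdot m!=(m+1)!$ from randomizing the unpaired vertex, just makes explicit what the paper compresses into the single factor $\lceil \frac{|K^c|}{2}\rceil!$.
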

\begin{proof}
By Definition \ref{5.1.5} we consider the uniform probability $P^{\infty}$
on the maxima of a leaf $({\bf K}, \boldsymbol{ \alpha})$ of the essential tree
of foldings. As $P$ is SNFKG then each leaf is SNFKG;
then, the maxima are configurations $\omega$ satisfying
\eqref{5.5.1} with $K=\cup K_i$ with $K_i$ the sets of the
essential foldings leading to the leaf.

Let $\nu$ be the uniform distribution on the complete pairings of $\Lambda$,
i.e. configurations $\eta$'s whose active bonds are disjoint but pair every two  vertices
(see \cite{BG13} for more details), with the exception of at most one vertex.

We need to show that  each 
 $\omega$ satisfying \eqref{5.5.1}  is compatibile with the same number of  such
 complete pairings, and that no other $\omega$ is compatible.
 
 In fact, recall that the active bonds are antiferromagnetic, so each carries exactly one
 value $1$: a compatible configuration has thus half of the vertices being one,
 if $|\Lambda|$ is even, and in any case it must satisfy \eqref{5.5.1}.
 
 On the other hand, each $\omega $ satisfying \eqref{5.5.1} is compatible
 with exactly $\lceil \frac{|K^c|}{2} \rceil !$ complete pairings, and this
 finishes the proof.
 
\end{proof}
\begin{corollary} \label{5.5.10}
If $P$ is SNFKG then it is NA.
\end{corollary}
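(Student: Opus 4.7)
The plan is essentially to string together the two previous results in the section, since the infrastructure has already been assembled. The statement to be proved (Corollary 5.5.10) combines the hypothesis "SNFKG" with the conclusion "NA", and these have been linked to the intermediate concept of an antiferromagnetic, symmetric $\mathcal{B}$-RCR at infinity concentrated on isolated edges.

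First, I would invoke Lemma \ref{5.5.3}: starting from the assumption that $P$ is SNFKG, this lemma directly produces a $\mathcal{B}$-RCR $\nu_\infty$ at infinity of $P$ that is simultaneously antiferromagnetic, symmetric, and concentrated on isolated edges. No additional work is required here beyond citing the lemma, since its hypotheses match ours verbatim. The nontrivial content, namely that the limiting distribution on each infinite branch of the essential tree of foldings is uniform on configurations satisfying \eqref{5.5.1} and that this uniform distribution admits the RCR given by uniform complete pairings, is already packaged inside Lemma \ref{5.5.3}.

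Second, I would apply Theorem \ref{5.4.3} to the object produced in the first step. Theorem \ref{5.4.3} says precisely that any probability $P$ admitting such a $\mathcal{B}$-RCR at infinity is negatively associated, which is exactly the conclusion sought. Thus the deduction reduces to the two-line composition "Lemma \ref{5.5.3} $\Rightarrow$ RCR at infinity of the required type $\Rightarrow$ NA by Theorem \ref{5.4.3}".

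There is essentially no obstacle at the level of this corollary; all the difficulty has been pushed into Lemma \ref{5.5.3} (producing the correct limiting RCR from the SNFKG condition) and Theorem \ref{5.4.3} (propagating the inequality up the tree of foldings via the approximate versions of Lemmas \ref{2.3.2} and \ref{2.3.3}). The only thing to verify, were one being pedantic, is that the ``SNFKG'' hypothesis is genuinely what feeds the ``at infinity'' conclusion of Lemma \ref{5.5.3}: the strict inequalities in the definition of SNFKG are what ensure that in the limit along each branch, only the configurations satisfying \eqref{5.5.1} survive with positive mass, so that $P^\infty$ is uniform exactly on that set and admits the complete-pairings RCR. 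Once this is granted, the corollary follows immediately.
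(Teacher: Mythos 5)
Your proof is correct and matches the paper's own argument exactly: the paper likewise cites Lemma \ref{5.5.3} to obtain the antiferromagnetic, symmetric $\mathcal B$-RCR at infinity concentrated on isolated edges, and then invokes Theorem \ref{5.4.3} to conclude that $P$ is NA. Your additional remark about why the strictness in SNFKG is what feeds Lemma \ref{5.5.3} is a sensible sanity check but is not needed at the level of the corollary itself.
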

\begin{proof}
By Lemma \ref{5.5.3}, $P$ has a $\mathcal B$-RCR
by an antiferromagnetic and symmetric RCR concentrated on isolated edges.
By Theorem \ref{5.4.3} this implies that $P$ is NA.

\end{proof}
\begin{theorem}
If $P$ is NFKG then it is NA.
\end{theorem}
\begin{proof}
We prove the result by a perturbation method.

Let $P$ be NFKG, let $A,B \subseteq \Omega_{\Lambda}$
be increasing events, and suppose there exists $N \subseteq \Lambda$ such that
for all $\omega \in A \cap B$, $[\omega]_N \subseteq A, 
[\omega]_{N^c} \subseteq B$.

Given $\epsilon >0$ consider the probability such that 
$$
P_{\epsilon} (\omega) = \frac{P(\omega) }{Z_{\epsilon}}
(1+\epsilon)^{  |\{ b=\{u,v\} \in \mathcal B(\Lambda): \, \omega_u \neq \omega_v\}|
}.
$$
If $P$ is NFKG then $P_{\epsilon}$ is SNFKG. Suppose, in fact, that $\omega \in \Omega_{\Lambda}$ satisfies
 \eqref{5.5.1}.
 
1. If also $\omega'\in \Omega_{\Lambda}$ satisfies \eqref{5.5.1},
then $P(\omega) = P(\omega')$, by definition of 
NFKG, and
 \begin{eqnarray} \label{5.5.5}
P_{\epsilon} (\omega) = \frac{P(\omega) }{Z_{\epsilon}}
(1+\epsilon)^{\lfloor\frac{|\Lambda|}{2}\rfloor
} =\frac{P(\omega') }{Z_{\epsilon}}
(1+\epsilon)^{\lfloor\frac{|\Lambda|}{2}\rfloor
} =P_{\epsilon} (\omega') 
\end{eqnarray}

2. If, on the other hand,  $\omega' \in \Omega_{\Lambda}$ does not
satisfy \eqref{5.5.1}, then 
$P(\omega) \geq P(\omega')$, and, by definition,
it is enough to show that 
\begin{eqnarray}
  \lfloor\frac{|\Lambda|}{2}\rfloor
>  |\{ b=\{u,v\} \in \mathcal B(\Lambda), \omega'_u \neq \omega'_v\}|.
\nonumber
\end{eqnarray}
For this, it is enough that for all $m$, $\Lambda=\{1, \dots,m\}, 
\Omega=\{0,1\}^{\Lambda}, \epsilon >0$, the function 
$\omega \rightarrow |\{ b=\{u,v\} \in \mathcal B(\Lambda), u, v \in \Lambda,\omega_u \neq \omega_v\}|$
has its maximum for the $\omega$'s such that $||\omega|-\frac{m}{2} | \leq \frac{1}{2}$
and it is strictly smaller for all $\omega$'s such that $||\omega|-\frac{m}{2} | >\frac{1}{2}$.
But, in fact, if $|\omega| = k$ then
\begin{eqnarray*}
&&|\{ b=\{u,v\} \in \mathcal B(\Lambda), u, v \in \Lambda,\omega_u \neq \omega_v\}| \\
&&  \quad \quad  \quad \quad =k(m-k)-(\frac{k(k-1)}{2} + \frac{(m-k)(m-k-1)}{2}) \\
&&  \quad \quad  \quad \quad = 3 k m - 3k^2-m^2+m=:a_k.
\end{eqnarray*}

Now, $a_{k+1}-a_k=3m-3-6k$ and $a_{k+1} \geq a_k$ is equivalent to 
$k\leq \frac{m-1}{2}$ with equality if and only if $|k-\frac{m}{2}| \leq \frac{1}{2}$;
in such case $a_k=\lfloor \frac{m}{2} \rfloor$, which is what is required to show that
$P_{\epsilon}$ is SNFKG.

Since $P_{\epsilon}$ is SNFKG than it is NA by Corollary \ref{5.5.10}. Hence,
$P_{\epsilon} (A \cap B) \leq P_{\epsilon}(A) P_{\epsilon}(B)$
for every $\epsilon >0$. As $\Lambda$ and $F_i$'s
are finite, for every $A$ $\lim_{\epsilon \rightarrow 0} P_{\epsilon}(A) = P(A)$,
so also $P$ is NA.
\end{proof}

\begin{example}
If $P$ is exchangeable, then let $p_k=P(\omega)$ if $|\omega|=k$, 
$k=0, 1, \dots, |\Lambda|$. Then if $P$ is NFKG one can deduce that
for $s<m/2$
$$
p_{r+m-s} p_{r+s} \geq p_{r+m-s+1} p_{r+s-1};
$$
 from this it is easy to see that $P$ is NFKG
if and only if
$p_{k+1} p_{k-1} \leq p_k^2$ for $K=1, \dots, |\Lambda|-1$, 
i.e. $P$ is ultra log concave (ULC). This gives an alternative proof of the 
result in \cite{P00} that an exchangeable ULC is NA (actually,
we prove that $P$ is CNA$+$, in the terminology of \cite{P00}, as also done there).
\end{example}

\small

\end{document}